\numberwithin{equation}{section}
\newtheorem{theorem}{Theorem}[section]
\newtheorem{lemma}[theorem]{Lemma}
\theoremstyle{definition}
\newtheorem{definition}[theorem]{Definition}
\newtheorem{example}[theorem]{Example}
\begin{document}
\setcounter{page}{1}
\title{Integral Operator Frames on Hilbert C*-modules}
\author[H. Labrigui, M. Rossafi, A. Touri and Nadia Assila]{Hatim Labrigui$^{1}$, Mohamed Rossafi$^{*2}$, Abdeslam Touri$^{1}$ and Nadia Assila$^{1}$}
\address{$^{1}$Department of Mathematics, Faculty of Sciences, Ibn Tofail University, Kenitra, Morocco}
\email{hlabrigui75@gmail.com; touri.abdo68@gmail.com; nadia.assila@uit.ac.ma}
\address{$^{2}$LASMA Laboratory Department of Mathematics, Faculty of Sciences Dhar El Mahraz, University Sidi Mohamed Ben Abdellah, Fes, Morocco}
\email{\textcolor[rgb]{0.00,0.00,0.84}{rossafimohamed@gmail.com; mohamed.rossafi@usmba.ac.ma}}
\date{
\newline \indent $^{*}$ Corresponding author}
\subjclass[2010]{Primary 42C15; Secondary 46L05}

\keywords{Frame, Operator frame, $C^{\ast}$-algebra, Hilbert $\mathcal{A}$-modules}

\date{...}
\maketitle

\begin{abstract}
	Introduced by Duffin and Schaefer as a part of their work on nonhamonic fourrier series in 1952, the theory of frames has undergone a very interesting evolution in recent decades following the multiplicity of work carried out in this field. In this work, we introduce a new concept that of integral operator frame for the set of all adjointable operators on a Hilbert C*-modules H and we give some new  propertis relating for some construction of integral operator frame, also we establish some new results. Some illustrative examples are provided to advocate the usability of our results.
\end{abstract}

\section{Introduction and preliminaries}
\quad
In 1952 Duffin and Schaefer \cite{Duf} have introduced the concept of frames in the study of nonharmonic Fourier series. Frames possess many nice properties which make them very useful in wavelet analysis, irregular sampling theory, signal processing and many other fields. The theory of frames has been generalized rapidly and various generalizations of frames in Hilbert spaces and Hilbert $C^{\ast}$-modules \cite{F4}.
Theory of frames have been extended from Hilbert spaces to Hilbert  $C^{\ast}$-module either in the discrete case or in the continuous case  (see \cite{F4, LG, mjpaa,mjpaaa, r4,  moi,r8, r11,r2}).

The aim of this paper is to extend results of Chun-Yan Li and Huai-Xin Cao \cite{Chun}, given in the discrete case for Hilbert spaces to the continuous case for Hilbert $C^{\ast}$-modules.
For this end, we introduce and we study the concept of integral operator frames for $End_{\mathcal{A}}^{\ast}(\mathcal{H})$. Also, some properties are given. 
In what follows, we set $\mathcal{H}$ a separable Hilbert space and $End_{\mathcal{A}}^{\ast}(\mathcal{H}) $ the set of all adjointable operators from Hilbert $C^{\ast}$-modules $\mathcal{H}$ to  $\mathcal{H}$ and $(\Omega,\mu)$ is a measure space with positive measure $\mu$.\\
Let $K,T \in End_{\mathcal{A}}^{\ast}(\mathcal{H})  $, if $TK=I$, then $T$ is called the left inverse of $K$, denoted by $K_{l}^{-1}$.\\
If $KT=I$, then $T$ is called the right inverse of $K$ and we write $K_{r}^{-1}=T$.\\
If $KT=TK=I$, then $T$ and $K$ are inverse of each other.\\
For a separable Hilbert space $H$ and a measurable space $(\Omega,\mu)$, we define,
\begin{equation*}
l^{2}(\Omega,\mathcal{H})=\{x_{\omega} \in \mathcal{H},\quad  \omega \in \Omega,\quad \left\|\int_{\Omega}\langle x_{\omega},x_{\omega}\rangle d\mu(\omega)\right\| < \infty  \}.
\end{equation*}
For any $x=(x_{\omega})_{\omega \in \Omega}$ and $y=(y_{\omega})_{\omega \in \Omega}$, the inner product on $l^{2}(\Omega,H)$ is defined by, 
\begin{equation*}
\langle x,y\rangle = \int_{\Omega}\langle x_{\omega},y_{\omega}\rangle d\mu(\omega).
\end{equation*}
The norme is defined by $\|x\|=\langle x,x\rangle^{\frac{1}{2}}$.

In this section we briefly recall the definitions and basic properties of $C^{\ast}$-algebra, Hilbert $\mathcal{A}$-modules, frame in Hilbert $\mathcal{A}$-modules. For information about frames in Hilbert spaces we refer to \cite{Ch}. Our references for $C^{\ast}$-algebras are \cite{{Dav},{Con}}.\\
For a $C^{\ast}$-algebra $\mathcal{A}$, if $a\in\mathcal{A}$ is positive we write $a\geq 0$ and $\mathcal{A}^{+}$ denotes the set of positive elements of $\mathcal{A}$.
\begin{definition}\cite{Kap}.
	Let $ \mathcal{A} $ be a unital $C^{\ast}$-algebra and $\mathcal{H}$ be a left $ \mathcal{A} $-module, such that the linear structures of $\mathcal{A}$ and $ \mathcal{H} $ are compatible. $\mathcal{H}$ is a pre-Hilbert $\mathcal{A}$-module if $\mathcal{H}$ is equipped with an $\mathcal{A}$-valued inner product $\langle.,.\rangle_{\mathcal{A}} :\mathcal{H}\times\mathcal{H}\rightarrow\mathcal{A}$, such that is sesquilinear, positive definite and respects the module action. In the other words,
	\begin{itemize}
		\item [(i)] $ \langle x,x\rangle_{\mathcal{A}}\geq0 $ for all $ x\in\mathcal{H} $ and $ \langle x,x\rangle_{\mathcal{A}}=0$ if and only if $x=0$.
		\item [(ii)] $\langle ax+y,z\rangle_{\mathcal{A}}=a\langle x,y\rangle_{\mathcal{A}}+\langle y,z\rangle_{\mathcal{A}}$ for all $a\in\mathcal{A}$ and $x,y,z\in\mathcal{H}$.
		\item[(iii)] $ \langle x,y\rangle_{\mathcal{A}}=\langle y,x\rangle_{\mathcal{A}}^{\ast} $ for all $x,y\in\mathcal{H}$.
	\end{itemize}	 
\end{definition}
For $x\in\mathcal{H}, $ we define $||x||=||\langle x,x\rangle||^{\frac{1}{2}}$. If $\mathcal{H}$ is complete with $||.||$, it is called a Hilbert $\mathcal{A}$-module or a Hilbert $C^{\ast}$-module over $\mathcal{A}$. For every $a$ in $C^{\ast}$-algebra $\mathcal{A}$, we have $|a|=(a^{\ast}a)^{\frac{1}{2}}$ and the $\mathcal{A}$-valued norm on $\mathcal{H}$ is defined by $|x|=\langle x, x\rangle^{\frac{1}{2}}$ for $x\in\mathcal{H}$.

Let $\mathcal{H}$ and $\mathcal{K}$ be two Hilbert $\mathcal{A}$-modules, A map $T:\mathcal{H}\rightarrow\mathcal{K}$ is said to be adjointable if there exists a map $T^{\ast}:\mathcal{K}\rightarrow\mathcal{H}$ such that $\langle Tx,y\rangle_{\mathcal{A}}=\langle x,T^{\ast}y\rangle_{\mathcal{A}}$ for all $x\in\mathcal{H}$ and $y\in\mathcal{K}$.

We also reserve the notation $End_{\mathcal{A}}^{\ast}(\mathcal{H},\mathcal{K})$ for the set of all adjointable operators from $\mathcal{H}$ to $\mathcal{K}$ and $End_{\mathcal{A}}^{\ast}(\mathcal{H},\mathcal{H})$ is abbreviated to $End_{\mathcal{A}}^{\ast}(\mathcal{H})$.
\begin{definition}\cite{BA}
	Let $ \mathcal{H} $ be a Hilbert $\mathcal{A}$-module over a unital $C^{\ast}$-algebra. A family $\{x_{i}\}_{i\in I}$ of elements of $\mathcal{H}$ is said to be a frame for $ \mathcal{H} $, if there
	exist two positive constants $A,B$ such that,
	\begin{equation}\label{eq01}
	A\langle x,x\rangle_{\mathcal{A}}\leq\sum_{i\in I}\langle x,x_{i}\rangle_{\mathcal{A}}\langle x_{i},x\rangle_{\mathcal{A}}\leq B\langle x,x\rangle_{\mathcal{A}}, \qquad x\in \mathcal{H}.
	\end{equation}
	The numbers $A$ and $B$ are called lower and upper bounds of the frame, respectively. If $A=B=\lambda$, the frame is called $\lambda$-tight. If $A = B = 1$, it is called a normalized tight frame or a Parseval frame. If only upper inequality of \eqref{eq01} hold, then $\{x_{i}\}_{i\in I}$ is called a Bessel sequence for $\mathcal{H}$.
\end{definition}
In \cite{LG}, L. Gavruta introduced $K$-frames to study atomic systems for operators in Hilbert spaces.
\begin{definition}\cite{Gav} Let $K\in End_{\mathcal{A}}^{\ast}(\mathcal{H})$. A family $\{x_{i}\}_{i\in I}$ of elements in a Hilbert $\mathcal{A}$-module $\mathcal{H}$ over a unital $C^{\ast}$-algebra is a $K$-frame for $ \mathcal{H} $, if there exist two positive constants $A$ and $B$, such that,
	\begin{equation}\label{11}
	A\langle K^{\ast}x,K^{\ast}x\rangle_{\mathcal{A}}\leq\sum_{i\in I}\langle x,x_{i}\rangle_{\mathcal{A}}\langle x_{i},x\rangle_{\mathcal{A}}\leq B\langle x,x\rangle_{\mathcal{A}}, \qquad x\in\mathcal{H}.
	\end{equation}
	The numbers $A$ and $B$ are called lower and upper bounds of the $K$-frame, respectively.
\end{definition}
The following lemmas will be used to prove our mains results
\begin{lemma} \label{l1} \cite{Pas}
	Let $\mathcal{H}$ be a Hilbert $\mathcal{A}$-module. For $T\in End_{\mathcal{A}}^{\ast}(\mathcal{H})$, we have $$\langle Tx,Tx\rangle_{\mathcal{A}}\leq\|T\|^{2}\langle x,x\rangle_{\mathcal{A}},\qquad  \forall x\in\mathcal{H}.$$
\end{lemma}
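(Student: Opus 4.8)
The plan is to work inside $End_{\mathcal{A}}^{\ast}(\mathcal{H})$, which is itself a unital $C^{\ast}$-algebra with unit $I$, so that positivity and the continuous functional calculus are available. The first step is to record the standard facts that $T^{\ast}T$ is a positive element of $End_{\mathcal{A}}^{\ast}(\mathcal{H})$ and that $\|T^{\ast}T\|=\|T\|^{2}$ (the $C^{\ast}$-identity). Consequently $0\leq T^{\ast}T\leq\|T^{\ast}T\|\,I=\|T\|^{2}I$ in $End_{\mathcal{A}}^{\ast}(\mathcal{H})$, so the operator $\|T\|^{2}I-T^{\ast}T$ is positive.

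The second step is to take its positive square root $S:=\bigl(\|T\|^{2}I-T^{\ast}T\bigr)^{1/2}\in End_{\mathcal{A}}^{\ast}(\mathcal{H})$, which is self-adjoint and adjointable, so that $S^{\ast}S=\|T\|^{2}I-T^{\ast}T$. Then for an arbitrary $x\in\mathcal{H}$ one computes, using the adjoint relation and $\mathcal{A}$-linearity of the inner product,
$$\|T\|^{2}\langle x,x\rangle_{\mathcal{A}}-\langle Tx,Tx\rangle_{\mathcal{A}}=\langle(\|T\|^{2}I-T^{\ast}T)x,x\rangle_{\mathcal{A}}=\langle S^{\ast}Sx,x\rangle_{\mathcal{A}}=\langle Sx,Sx\rangle_{\mathcal{A}}\geq 0,$$
where the final inequality is axiom (i) of the $\mathcal{A}$-valued inner product. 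Rearranging this yields $\langle Tx,Tx\rangle_{\mathcal{A}}\leq\|T\|^{2}\langle x,x\rangle_{\mathcal{A}}$ for all $x\in\mathcal{H}$, which is the claim.

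The only genuinely delicate point — and the reason the square-root step is essential rather than cosmetic — is the transition from the operator inequality $\|T\|^{2}I-T^{\ast}T\geq 0$ in the $C^{\ast}$-algebra $End_{\mathcal{A}}^{\ast}(\mathcal{H})$ to the $\mathcal{A}$-valued inequality asserted in the lemma: positivity of an adjointable operator does not by itself say anything about the $\mathcal{A}$-valued quantity $\langle\,\cdot\,x,x\rangle_{\mathcal{A}}$, but writing the positive operator as $S^{\ast}S$ lets us push it through the inner product and land on $\langle Sx,Sx\rangle_{\mathcal{A}}$, which is positive by definition. Everything else is routine $C^{\ast}$-algebra and module bookkeeping.
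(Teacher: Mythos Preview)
Your argument is correct: the chain $0\le T^{\ast}T\le\|T\|^{2}I$ in the $C^{\ast}$-algebra $End_{\mathcal{A}}^{\ast}(\mathcal{H})$, followed by the square-root factorisation $\|T\|^{2}I-T^{\ast}T=S^{\ast}S$ to convert operator positivity into positivity of $\langle Sx,Sx\rangle_{\mathcal{A}}$, is exactly the standard proof of this inequality in Hilbert $C^{\ast}$-modules. Note that the paper does not actually supply a proof of this lemma; it is quoted from Paschke \cite{Pas} as a preliminary fact, so there is no in-paper argument to compare against. Your write-up would serve perfectly well as the missing justification.
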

\begin{lemma} \label{sb} \cite{Ara}.
	Let $\mathcal{H}$ and $\mathcal{K}$ be two Hilbert $\mathcal{A}$-modules and $T\in End_{\mathcal{A}}^{\ast}(\mathcal{H},\mathcal{K})$. Then the following statements are equivalent:
	\begin{itemize}
		\item [(i)] $T$ is surjective.
		\item [(ii)] There are $m,M>0$ such that  $m\|x\|\leq \|Tx\|\leq M\|x\|$, for all $x\in \mathcal{H}$.
		\item [(iii)] There are $m^{'},M^{'}>0$ such that $m^{'}\langle x,x\rangle_{\mathcal{A}} \leq \langle  Tx,Tx\rangle_{\mathcal{A}}\leq M'\langle x,x\rangle_{\mathcal{A}}$, for all $x\in \mathcal{H}$.
	\end{itemize}
\end{lemma}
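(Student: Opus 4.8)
The plan is to establish the equivalences using as the main engine the identity $\langle Tx,Tx\rangle_{\mathcal{A}}=\langle T^{\ast}Tx,x\rangle_{\mathcal{A}}$, which rewrites condition (iii) as the two-sided operator estimate $m'I\leq T^{\ast}T\leq M'I$ in the $C^{\ast}$-algebra $End_{\mathcal{A}}^{\ast}(\mathcal{H})$, that is, as the statement that the positive operator $T^{\ast}T$ is invertible. Condition (ii) will be read as the assertion that $T$ is bounded and bounded below. The route to surjectivity (i) then passes through the invertibility of the positive operator attached to $T$, together with the structure theorem that an adjointable operator with closed range has orthogonally complemented range.

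I would first dispatch the easy implication $(iii)\Rightarrow(ii)$. Because the inner product is positive and the $C^{\ast}$-norm is monotone on the positive cone, applying $\|\cdot\|$ to $m'\langle x,x\rangle_{\mathcal{A}}\leq\langle Tx,Tx\rangle_{\mathcal{A}}\leq M'\langle x,x\rangle_{\mathcal{A}}$ and using $\|Tx\|^{2}=\|\langle Tx,Tx\rangle_{\mathcal{A}}\|$ and $\|x\|^{2}=\|\langle x,x\rangle_{\mathcal{A}}\|$ gives $\sqrt{m'}\,\|x\|\leq\|Tx\|\leq\sqrt{M'}\,\|x\|$, so (ii) holds with $m=\sqrt{m'}$ and $M=\sqrt{M'}$.

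The converse $(ii)\Rightarrow(iii)$ carries the real content, and I expect the lower bound to be the crux. The upper bound is painless: $\|Tx\|\leq M\|x\|$ forces $\|T\|\leq M$, so Lemma \ref{l1} yields $\langle Tx,Tx\rangle_{\mathcal{A}}\leq\|T\|^{2}\langle x,x\rangle_{\mathcal{A}}\leq M^{2}\langle x,x\rangle_{\mathcal{A}}$. The lower bound cannot be gotten by transporting a norm inequality to an order inequality directly; instead I would show that $T^{\ast}T$ is invertible. From $\|Tx\|\geq m\|x\|$, the Cauchy--Schwarz inequality $\|\langle T^{\ast}Tx,x\rangle_{\mathcal{A}}\|\leq\|T^{\ast}Tx\|\,\|x\|$ and $\langle T^{\ast}Tx,x\rangle_{\mathcal{A}}=\langle Tx,Tx\rangle_{\mathcal{A}}$ give $\|T^{\ast}Tx\|\geq m^{2}\|x\|$, so the positive, self-adjoint operator $T^{\ast}T$ is bounded below in norm; it is therefore injective with closed range, and since a closed range is orthogonally complemented the trivial kernel forces $T^{\ast}T$ to be bijective, hence invertible in $End_{\mathcal{A}}^{\ast}(\mathcal{H})$. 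A positive invertible operator satisfies $T^{\ast}T\geq\|(T^{\ast}T)^{-1}\|^{-1}I$, and reinserting this into $\langle Tx,Tx\rangle_{\mathcal{A}}=\langle T^{\ast}Tx,x\rangle_{\mathcal{A}}$ produces the lower bound with $m'=\|(T^{\ast}T)^{-1}\|^{-1}$.

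The remaining and most delicate task is to couple these bounds to surjectivity (i). Conditions (ii)--(iii) amount to saying that $T$ is injective with closed range, so the heart of the matter is to upgrade closedness of $\operatorname{ran}T$ to $\operatorname{ran}T=\mathcal{K}$. The tools I would bring to bear are the open mapping theorem and the theorem that an adjointable operator with closed range has orthogonally complemented range, which reduces the question to controlling the orthogonal complement of the range. I expect precisely this passage — from a two-sided bound, which only carries injectivity and closedness of the range, to the range-filling statement of surjectivity — to be the main obstacle, since it is the step that genuinely exploits the Hilbert $C^{\ast}$-module structure rather than a formal manipulation of the estimates, and where the comparison with the Hilbert space argument must be made with care.
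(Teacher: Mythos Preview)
The paper does not supply a proof of this lemma; it is quoted from Aramba\v{s}i\'{c}. Your argument for the equivalence $(ii)\Leftrightarrow(iii)$ is correct and is the standard one.

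Your worry about the passage to surjectivity, however, is fully justified, and for a more serious reason than you suggest: as written, the lemma is false. Conditions (ii) and (iii) say precisely that $T$ is bounded and bounded below, i.e.\ injective with closed range, and this is \emph{not} equivalent to $T$ being surjective. On $\mathcal{H}=\mathcal{K}=\ell^{2}(\mathcal{A})$ the unilateral right shift is an isometry (so (ii) holds with $m=M=1$) but is not onto, while the left shift is onto but annihilates the first coordinate, so (ii) fails. Hence neither implication between (i) and (ii) holds in general, and no argument along the lines you sketch can close the gap --- the ``main obstacle'' you flagged is in fact an impossibility.

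The statement that actually appears in Aramba\v{s}i\'{c}'s paper, and the one the present paper really needs, has $T^{\ast}$ in place of $T$ in (ii) and (iii): $T$ is surjective if and only if $T^{\ast}$ is bounded below in norm, if and only if $m'\langle y,y\rangle_{\mathcal{A}}\leq\langle T^{\ast}y,T^{\ast}y\rangle_{\mathcal{A}}$ for some $m'>0$ and all $y\in\mathcal{K}$. In both places where the lemma is invoked in this paper the operator in question is self-adjoint, so the misprint is harmless for the applications. With that correction your outline goes through cleanly: the equivalence of the two lower-bound conditions is exactly your $(ii)\Leftrightarrow(iii)$ argument applied to $T^{\ast}$; the lower bound on $T^{\ast}$ forces $TT^{\ast}$ to be invertible (your $T^{\ast}T$ argument with the roles swapped), and then $T\bigl(T^{\ast}(TT^{\ast})^{-1}\bigr)=I_{\mathcal{K}}$ exhibits a right inverse for $T$; conversely, if $T$ is surjective then $TT^{\ast}$ is invertible by Lemma~\ref{l3}(ii), and $\langle T^{\ast}y,T^{\ast}y\rangle_{\mathcal{A}}=\langle TT^{\ast}y,y\rangle_{\mathcal{A}}\geq\|(TT^{\ast})^{-1}\|^{-1}\langle y,y\rangle_{\mathcal{A}}$ gives the lower bound.
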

\begin{lemma} \label{l2} \cite{Zha}
	Let $\mathcal{H}$ be a Hilbert $\mathcal{A}$-module over a $C^{\ast}$-algebra $\mathcal{A}$ and let $T, S$ two operators for $End_{\mathcal{A}}^{\ast}(\mathcal{H})$. If $Rang(S)$ is closed, then the following statements are equivalent:
	\begin{itemize}
		\item [(i)] $Rang(T)\subseteq Rang(S)$.
		\item [(ii)] $ TT^{\ast}\leq \lambda SS^{\ast}$ for some $\lambda>0$.
		\end{itemize}
\end{lemma}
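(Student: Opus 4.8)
This is the Hilbert $C^{\ast}$-module analogue of Douglas' range-inclusion theorem, and the plan is to prove the two implications separately, using the hypothesis that $Rang(S)$ is closed to supply the orthogonal decompositions that are not automatic in the module setting. The starting point is the standard fact that a closed-range adjointable operator $S$ has an adjointable Moore--Penrose inverse $S^{\dagger}\in End_{\mathcal{A}}^{\ast}(\mathcal{H})$ with $SS^{\dagger}S=S$ and $SS^{\dagger}$ equal to the orthogonal projection of $\mathcal{H}$ onto $Rang(S)$, that $Rang(S^{\ast})$ is then also closed, and that $\mathcal{H}=\ker(S)\oplus Rang(S^{\ast})$ as an orthogonal direct sum of Hilbert $\mathcal{A}$-modules.

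For $(i)\Rightarrow(ii)$ I would put $D=S^{\dagger}T\in End_{\mathcal{A}}^{\ast}(\mathcal{H})$. Since $Tx\in Rang(T)\subseteq Rang(S)$ for every $x\in\mathcal{H}$, the projection $SS^{\dagger}$ fixes $Tx$, which gives $SD=T$. Then $TT^{\ast}=S(DD^{\ast})S^{\ast}$, and because $DD^{\ast}\leq\|DD^{\ast}\|\,I=\|D\|^{2}I$ in the $C^{\ast}$-algebra $End_{\mathcal{A}}^{\ast}(\mathcal{H})$, conjugating this inequality by $S$ yields $TT^{\ast}\leq\|D\|^{2}SS^{\ast}$; one may take $\lambda=\|D\|^{2}+1$.

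For $(ii)\Rightarrow(i)$ I would first rewrite $TT^{\ast}\leq\lambda SS^{\ast}$ as the pointwise estimate $\langle T^{\ast}x,T^{\ast}x\rangle_{\mathcal{A}}\leq\lambda\,\langle S^{\ast}x,S^{\ast}x\rangle_{\mathcal{A}}$ for all $x\in\mathcal{H}$ (apply both operators to $x$ and pair with $x$). This makes the assignment $S^{\ast}x\mapsto T^{\ast}x$ well defined on $Rang(S^{\ast})$ (if $S^{\ast}x=S^{\ast}y$ the right-hand side vanishes, forcing $T^{\ast}x=T^{\ast}y$) and bounded there, with norm at most $\sqrt{\lambda}$. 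Extending it by $0$ along $\mathcal{H}=\ker(S)\oplus Rang(S^{\ast})$ produces an operator $E$ that is adjointable — as one sees by writing $E=T^{\ast}(S^{\ast})^{\dagger}P$ with $P$ the projection onto $Rang(S^{\ast})$ — and that satisfies $ES^{\ast}=T^{\ast}$. Taking adjoints gives $T=SE^{\ast}$, hence $Rang(T)\subseteq Rang(S)$.

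The main obstacle is purely module-theoretic: bounded $\mathcal{A}$-linear maps need not be adjointable and closed submodules need not be orthogonally complemented, so the classical Hilbert-space argument does not transfer verbatim. The closed-range assumption on $S$ is exactly what repairs this, furnishing the adjointable pseudo-inverses, the projection onto $Rang(S)$, and the splitting $\mathcal{H}=\ker(S)\oplus Rang(S^{\ast})$ used above. A pseudo-inverse-free variant would route the argument through the module polar decomposition of $S$ instead, but I would keep the present formulation because it makes the factorizations $T=SD$ and $T=SE^{\ast}$ completely explicit.
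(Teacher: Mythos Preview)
The paper does not prove this lemma; it is quoted as a preliminary result from \cite{Zha} with no accompanying argument, so there is nothing in the paper to compare your proof against.

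That said, your proposal is correct and is essentially the standard Douglas-type factorization argument transported to the Hilbert $C^{\ast}$-module setting via the Moore--Penrose inverse of a closed-range adjointable operator. For $(i)\Rightarrow(ii)$ the factorization $T=SD$ with $D=S^{\dagger}T$ and the conjugation $TT^{\ast}=S(DD^{\ast})S^{\ast}\leq\|D\|^{2}SS^{\ast}$ work exactly as you describe. For $(ii)\Rightarrow(i)$ the pointwise inequality $\langle T^{\ast}x,T^{\ast}x\rangle_{\mathcal{A}}\leq\lambda\langle S^{\ast}x,S^{\ast}x\rangle_{\mathcal{A}}$ forces $\ker(S^{\ast})\subseteq\ker(T^{\ast})$, which is precisely what makes your formula $E=T^{\ast}(S^{\ast})^{\dagger}$ satisfy $ES^{\ast}=T^{\ast}$ (the projection $P$ is redundant, since $(S^{\ast})^{\dagger}$ already vanishes on $\ker(S)=Rang(S^{\ast})^{\perp}$). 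Your remarks on why the closed-range hypothesis is needed --- to guarantee adjointable pseudo-inverses and the orthogonal splitting $\mathcal{H}=\ker(S)\oplus Rang(S^{\ast})$ --- are to the point.
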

\begin{lemma} \label{l3} \cite{Ali}.
	Let $\mathcal{H}$ and $\mathcal{K}$ be two Hilbert $\mathcal{A}$-modules and $T\in End^{\ast}(\mathcal{H},\mathcal{K})$. Then:
	\begin{itemize}
		\item [(i)] If $T$ is injective and $T$ has closed range, then the adjointable map $T^{\ast}T$ is invertible and $$\|(T^{\ast}T)^{-1}\|^{-1}\leq T^{\ast}T\leq\|T\|^{2}.$$
		\item  [(ii)]	If $T$ is surjective, then the adjointable map $TT^{\ast}$ is invertible and $$\|(TT^{\ast})^{-1}\|^{-1}\leq TT^{\ast}\leq\|T\|^{2}.$$
	\end{itemize}	
\end{lemma}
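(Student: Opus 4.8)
The plan is to prove assertion (i) directly and then deduce (ii) by applying (i) to the adjoint $T^{\ast}$.

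For (i): since $T$ is injective with closed range, $\mathrm{Ran}(T)$ is an orthogonally complemented (hence itself a Hilbert $\mathcal{A}$-) submodule of $\mathcal{K}$, so $T$ corestricted to $\mathrm{Ran}(T)$ is an adjointable surjection. Lemma \ref{sb} then supplies $m',M'>0$ with $m'\langle x,x\rangle_{\mathcal{A}}\le\langle Tx,Tx\rangle_{\mathcal{A}}\le M'\langle x,x\rangle_{\mathcal{A}}$ for all $x\in\mathcal{H}$. Using $\langle Tx,Tx\rangle_{\mathcal{A}}=\langle T^{\ast}Tx,x\rangle_{\mathcal{A}}$, this reads $m'I\le T^{\ast}T$ in the order of the $C^{\ast}$-algebra $End_{\mathcal{A}}^{\ast}(\mathcal{H})$. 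A positive element of a $C^{\ast}$-algebra dominating a positive multiple of the unit has spectrum in $[m',\infty)$, hence is invertible; thus $T^{\ast}T$ is invertible, $(T^{\ast}T)^{-1}\ge 0$, and $\|(T^{\ast}T)^{-1}\|=\big(\min\sigma(T^{\ast}T)\big)^{-1}$, so $\min\sigma(T^{\ast}T)=\|(T^{\ast}T)^{-1}\|^{-1}$. This gives $\|(T^{\ast}T)^{-1}\|^{-1}I\le T^{\ast}T$, while $T^{\ast}T\le\|T^{\ast}T\|I=\|T\|^{2}I$ gives the upper bound (Lemma \ref{l1} also yields the latter). One can sidestep the corestriction step by noting that $\ker(T^{\ast}T)=\ker T=\{0\}$ and that closed range of $T$ forces $T^{\ast}T$ to have closed range; then the positive operator $T^{\ast}T$ is injective with closed range, hence bijective, hence invertible.

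For (ii): if $T$ is surjective, Lemma \ref{sb} gives $m\|x\|\le\|Tx\|$, so $T$ is also injective, hence bijective; the inverse of a bijective adjointable operator is again adjointable, so $(T^{\ast})^{-1}=(T^{-1})^{\ast}$ exists and $T^{\ast}$ is bijective, in particular injective with closed range. Applying part (i) to $T^{\ast}$, for which $(T^{\ast})^{\ast}T^{\ast}=TT^{\ast}$, shows $TT^{\ast}$ is invertible with $\|(TT^{\ast})^{-1}\|^{-1}I\le TT^{\ast}\le\|T^{\ast}\|^{2}I=\|T\|^{2}I$. (Alternatively: by Lemma \ref{sb}(iii), $T^{\ast}T\ge m'I$ is invertible, the self-adjoint idempotent $P:=T(T^{\ast}T)^{-1}T^{\ast}$ is the identity on $\mathrm{Ran}(T)=\mathcal{K}$, so $T(T^{\ast}T)^{-1}T^{\ast}=I$, and a short computation shows that $T(T^{\ast}T)^{-2}T^{\ast}$ is a two-sided inverse of $TT^{\ast}$; the norm estimates follow as before.)

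The routine ingredients are the algebraic identities and the elementary $C^{\ast}$-spectral facts about positive invertible elements. The one point needing genuine care is the module-theoretic input in (i): that a closed-range adjointable operator on a Hilbert $C^{\ast}$-module has \emph{orthogonally complemented} range (so that $\mathrm{Ran}(T)$ is a Hilbert $\mathcal{A}$-module and the corestriction of $T$ is adjointable and surjective, making Lemma \ref{sb} applicable), equivalently that closed range passes from $T$ to $T^{\ast}$ and to $T^{\ast}T$. This is the main obstacle; once it is available, everything else is bookkeeping.
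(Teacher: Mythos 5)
The paper offers no proof of Lemma \ref{l3} at all --- it is quoted from \cite{Ali} as a tool --- so your argument has to stand on its own. Part (i) essentially does: the orthogonal-complementation/closed-range facts you single out are exactly the required module-theoretic input, and the spectral bookkeeping ($m'I\le T^{\ast}T$ forces invertibility, $\min\sigma(T^{\ast}T)=\|(T^{\ast}T)^{-1}\|^{-1}$, and $T^{\ast}T\le\|T^{\ast}T\|=\|T\|^{2}$) is correct. One caveat even there: you invoke Lemma \ref{sb} in the direction ``surjective $\Rightarrow$ bounded below,'' which as printed in this paper is not a correct statement (in Aramba\v{s}i\'{c}'s original result the lower bounds are on $T^{\ast}$, not on $T$); in (i) this is harmless only because your corestricted operator is in fact \emph{bijective}, so the lower bound follows directly from $\langle x,x\rangle=\langle T_{0}^{-1}T_{0}x,T_{0}^{-1}T_{0}x\rangle\le\|T_{0}^{-1}\|^{2}\langle T_{0}x,T_{0}x\rangle$ (Lemma \ref{l1}).

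The genuine gap is in (ii). You argue: $T$ surjective $\Rightarrow$ $m\|x\|\le\|Tx\|$ $\Rightarrow$ $T$ injective, hence bijective. That implication is false: a surjective adjointable operator need not be injective or bounded below --- take $T:\mathcal{H}\oplus\mathcal{H}\to\mathcal{H}$, $T(x,y)=x$, with $T^{\ast}x=(x,0)$. Your parenthetical alternative fails for the same reason: for merely surjective $T$ the operator $T^{\ast}T$ need not be invertible (in the example $T^{\ast}T(x,y)=(x,0)$), so $T(T^{\ast}T)^{-1}T^{\ast}$ need not exist. The conclusion of (ii) is still true, and your intended reduction to (i) is the right idea; it just needs the hypotheses on $T^{\ast}$ justified correctly: $\ker T^{\ast}=\mathrm{Ran}(T)^{\perp}=\{0\}$ because $T$ is onto, and $\mathrm{Ran}(T^{\ast})$ is closed because $\mathrm{Ran}(T)=\mathcal{K}$ is closed (the same closed-range theorem you already use in (i)). Then (i) applied to $T^{\ast}$ gives $TT^{\ast}$ invertible with $\|(TT^{\ast})^{-1}\|^{-1}\le TT^{\ast}\le\|T^{\ast}\|^{2}=\|T\|^{2}$; equivalently, the correctly stated Aramba\v{s}i\'{c} lemma yields $\langle T^{\ast}y,T^{\ast}y\rangle\ge m\langle y,y\rangle$, i.e.\ $TT^{\ast}\ge mI$, directly.
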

\begin{lemma} \label{l4} \cite{33}.
	Let $(\Omega,\mu )$ be a measure space, $X$ and $Y$ are two Banach spaces, $\lambda : X\longrightarrow Y$ be a bounded linear operator and $f : \Omega\longrightarrow X$ measurable function; then, 
	\begin{equation*}
	\lambda (\int_{\Omega}fd\mu)=\int_{\Omega}(\lambda f)d\mu.
	\end{equation*}
\end{lemma}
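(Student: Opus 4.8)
The plan is to prove the identity first for simple functions and then pass to the general case through the density argument that underlies the very construction of the Bochner integral.

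\textbf{Step 1 (reduction to simple functions).} Recall that a measurable $f:\Omega\to X$ is Bochner integrable exactly when there is a sequence of simple functions $f_{n}=\sum_{k=1}^{m_{n}}x_{n,k}\chi_{E_{n,k}}$, with $\mu(E_{n,k})<\infty$, satisfying $f_{n}\to f$ almost everywhere and $\int_{\Omega}\|f_{n}-f\|\,d\mu\to 0$; the integral $\int_{\Omega}f\,d\mu$ is then the norm-limit in $X$ of the elementary sums $\int_{\Omega}f_{n}\,d\mu=\sum_{k}\mu(E_{n,k})\,x_{n,k}$. For each such $f_{n}$ the map $\lambda f_{n}=\sum_{k}\lambda(x_{n,k})\chi_{E_{n,k}}$ is again simple, so linearity of $\lambda$ gives at once
\[
\lambda\!\left(\int_{\Omega}f_{n}\,d\mu\right)=\lambda\!\left(\sum_{k}\mu(E_{n,k})x_{n,k}\right)=\sum_{k}\mu(E_{n,k})\lambda(x_{n,k})=\int_{\Omega}(\lambda f_{n})\,d\mu .
\]

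\textbf{Step 2 (passage to the limit).} Boundedness of $\lambda$ yields the pointwise estimate $\|\lambda f_{n}-\lambda f\|\leq\|\lambda\|\,\|f_{n}-f\|$, whence $\lambda f$ is strongly measurable and $\int_{\Omega}\|\lambda f_{n}-\lambda f\|\,d\mu\leq\|\lambda\|\int_{\Omega}\|f_{n}-f\|\,d\mu\to 0$; thus $\lambda f$ is Bochner integrable and $\int_{\Omega}(\lambda f_{n})\,d\mu\to\int_{\Omega}(\lambda f)\,d\mu$ in $Y$. On the other side, $\left\|\int_{\Omega}f_{n}\,d\mu-\int_{\Omega}f\,d\mu\right\|\leq\int_{\Omega}\|f_{n}-f\|\,d\mu\to 0$, and continuity of $\lambda$ gives $\lambda\!\left(\int_{\Omega}f_{n}\,d\mu\right)\to\lambda\!\left(\int_{\Omega}f\,d\mu\right)$ in $Y$. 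Feeding these two convergences into the Step 1 identity and invoking uniqueness of limits in $Y$ produces $\lambda\!\left(\int_{\Omega}f\,d\mu\right)=\int_{\Omega}(\lambda f)\,d\mu$, as claimed.

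\textbf{Expected main difficulty.} There is no genuine obstacle here; the only points that require care are (a) confirming that $\lambda f$ inherits strong measurability and Bochner integrability from $f$ — which is precisely where the boundedness of $\lambda$ is used — and (b) noting that one and the same approximating sequence $(f_{n})$ works for both sides, so the two limiting computations can be matched term by term. As an alternative route, if one interprets the integral weakly (Pettis), the statement follows even more directly: for every $\psi\in Y^{\ast}$ one has $\psi\circ\lambda\in X^{\ast}$, so $\psi\!\left(\lambda\!\left(\int_{\Omega}f\,d\mu\right)\right)=\int_{\Omega}(\psi\circ\lambda)(f)\,d\mu=\psi\!\left(\int_{\Omega}(\lambda f)\,d\mu\right)$, and Hahn--Banach finishes the argument. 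Since the intended applications in this paper use adjointable operators $\lambda$ on a Hilbert $C^{\ast}$-module (hence bounded), either formulation suffices.
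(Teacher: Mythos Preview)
Your argument is correct and is the standard proof: first verify the identity on simple functions by linearity, then pass to the limit using that $\|\lambda f_{n}-\lambda f\|\leq\|\lambda\|\,\|f_{n}-f\|$, which simultaneously gives Bochner integrability of $\lambda f$ and convergence of both sides.

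Note, however, that the paper does \emph{not} provide its own proof of this lemma. It is stated as a preliminary result and simply attributed to Yosida's \emph{Functional Analysis} via the citation; there is nothing in the paper to compare your argument against. Your write-up therefore supplies what the paper omits, and it does so in exactly the way Yosida's treatment proceeds. The alternative Pettis-integral route you sketch at the end is also valid but unnecessary here, since the applications in the paper are to norm-convergent (Bochner) integrals of operator families, and the simple-function approximation already covers that case cleanly.
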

\begin{theorem}\cite{Ch}\label{t0}
	Let $X$ be a Banach space, $U : X \longrightarrow X$ a bounded operator and $\|I-U\|<1$. Then $U$ is invertible.
\end{theorem}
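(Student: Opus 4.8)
The plan is to use the classical Neumann series. Write $V = I - U$, so that by hypothesis $\|V\| < 1$ and $U = I - V$. The idea is to exhibit an explicit two-sided inverse of $U$ as the sum of the series $\sum_{n=0}^{\infty} V^{n}$, mimicking the scalar identity $(1-t)^{-1} = \sum_{n \geq 0} t^{n}$ valid for $|t| < 1$.

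First I would recall that the space $B(X)$ of bounded linear operators on $X$, equipped with the operator norm, is a Banach algebra; in particular it is complete, and $\|ST\| \leq \|S\|\,\|T\|$ for all $S, T \in B(X)$. Consequently $\|V^{n}\| \leq \|V\|^{n}$ for every $n \geq 0$, and since $\|V\| < 1$ the numerical series $\sum_{n=0}^{\infty} \|V\|^{n}$ converges to $(1-\|V\|)^{-1}$. Hence the partial sums $S_{N} = \sum_{n=0}^{N} V^{n}$ form a Cauchy sequence in $B(X)$, because for $M > N$ one has $\|S_{M} - S_{N}\| \leq \sum_{n=N+1}^{M} \|V\|^{n} \to 0$ as $N \to \infty$. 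By completeness of $B(X)$ there exists $W \in B(X)$ with $W = \lim_{N \to \infty} S_{N} = \sum_{n=0}^{\infty} V^{n}$.

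Next I would verify that $W$ is a two-sided inverse of $U = I - V$. The telescoping identity
\begin{equation*}
(I - V) S_{N} = S_{N} (I - V) = \sum_{n=0}^{N} V^{n} - \sum_{n=1}^{N+1} V^{n} = I - V^{N+1}
\end{equation*}
holds for every $N$. Since $\|V^{N+1}\| \leq \|V\|^{N+1} \to 0$, letting $N \to \infty$ and using the continuity of left and right multiplication by the fixed operator $I - V$ yields $(I-V) W = W (I-V) = I$, that is, $UW = WU = I$. Therefore $U$ is invertible with $U^{-1} = W = \sum_{n=0}^{\infty} (I - U)^{n}$.

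The argument is essentially routine; the only point that requires a little care is the justification that the partial sums converge in $B(X)$, which rests on the completeness of $B(X)$ (equivalently, of $X$) together with the submultiplicativity of the operator norm. Once convergence is secured, passing to the limit in the telescoping identity is immediate.
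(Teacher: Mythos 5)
Your proof is correct: the Neumann series argument (setting $V=I-U$, using completeness of $B(X)$ and submultiplicativity to sum $\sum_{n\geq 0}V^{n}$, then passing to the limit in the telescoping identity) is precisely the standard proof of this classical fact, which the paper itself does not prove but simply quotes from the reference \cite{Ch}. Nothing is missing; the care you take over convergence of the partial sums and continuity of multiplication is exactly the right level of justification.
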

\section{Integral Operator Frames for $End_{\mathcal{A}}^{\ast}(\mathcal{H})$}
\quad

\begin{definition}
	A family of adjointable operators $\{T_{w}\}_{w\in\Omega} \subset End_{\mathcal{A}}^{\ast}(\mathcal{H}) $ on a Hilbert $\mathcal{A}$-module $\mathcal{H}$ over a unital $C^{\ast}$-algebra is said to be an integral operator frame for $End_{\mathcal{A}}^{\ast}(\mathcal{H})$, if there exist two positive constants $A, B > 0$ such that, 
	\begin{equation}\label{eq3}
		A\langle x,x\rangle_{\mathcal{A}} \leq\int_{\Omega}\langle T_{\omega}x,T_{\omega}x\rangle_{\mathcal{A}} d\mu(\omega)\leq B\langle x,x\rangle_{\mathcal{A}} , \quad x\in\mathcal{H}.
	\end{equation}

	If the sum in the middle of \eqref{eq3} is convergent in norm, the integral operator frame is called standard. 

	The numbers $A$ and $B$ are called respectively lower and upper bound of the integral operator frame.\\
	An integral operator frame $\{T_{\omega}\}_{\omega \in \Omega} \subset End_{\mathcal{A}}^{\ast}(\mathcal{H})$ is said to be $A$-tight if there exists a constant $0 < A$ such that,
\begin{equation*}
A\langle x,x\rangle_{\mathcal{A}}=\int_{\Omega} \langle T_{\omega}x,T_{\omega}x\rangle_{\mathcal{A}} d\mu(\omega), \quad x\in \mathcal{H}
\end{equation*}
If $A=1$, it is called a normalized tight integral operator frames or a Parseval integral operator frame.\\ 
If only the right-hand inequality of \eqref{eq3} is satisfied, we call $T :=\{T_{w}\}_{ w\in\Omega}$ the integral operator Bessel family for $\mathcal{H}$ with Bessel bound $B$.\\
\end{definition}
	\begin{example}
		Let $\mathcal{H}$ be the Hilbert space defined by:\\
		$\mathcal{H}=\left\{ A=\left( 
		\begin{array}{ccc}
		a & 0  \\ 
		0 & b 
		\end{array}%
		\right) \text{ / }a,b\in 
		\mathbb{C}
		\right\} $, \\
		It's know that $\mathcal{H}$ is a $C^{\ast}$-algebra and it's a Hilbert $C^{\ast}$-module over it's self.\\
		We define the inner product :
		\[
		\begin{array}{ccc}
		\mathcal{H}\times \mathcal{H} & \rightarrow  & \mathcal{H} \\ 
		(A,B) & \mapsto  & \langle A, B\rangle = \left( 
		\begin{array}{ccc}
		a\bar{a_{1}} & 0  \\ 
		0 & b\bar{b_{1}} 
		\end{array}%
		\right) %
		\end{array}%
		\]\\
		Now, we consider a measure space $(\Omega=\left[ 0,1\right] ,d\lambda ) $ whose $d\lambda $ is a Lebesgue measure  restraint on the interval $\left[ 0,1\right] $.\\
		For all $w \in \left[ 0,1\right] $, we define :
		\begin{align*}
		T_{\omega} : \mathcal{H} &\longrightarrow \mathcal{H}\\
		\left(   \begin{array}{ccc}
		a & 0  \\ 
		0 & b 
		\end{array}
		\right)&\longrightarrow \left(   \begin{array}{ccc}
		wa & 0  \\ 
		0 & \frac{\sqrt{3}wb}{2} 
		\end{array}
		\right)
		\end{align*}
		It is clear that the family $\{T_{w}\}_{w\in \left[ 0,1\right]}$ is a continuous operator on $\mathcal{H}$.\\
		Moreover, we have :
		\[
		\begin{array}{ccc}
			\langle T_{w}A, T_{w}A\rangle = w^{2}\left( 
		\begin{array}{ccc}
		|a|^{2} & 0  \\ 
		0 & \frac{3|b|^{2}}{4} 
		\end{array}%
		\right) %
		\end{array}%
		\]\\
			
		Hence,
		\[
		\begin{array}{ccc}
		\int_{\Omega}\langle T_{w}A, T_{w}A\rangle d\lambda(w) =\int_{[0,1]} w^{2}d\lambda\left( 
		\begin{array}{ccc}
		|a|^{2} & 0  \\ 
		0 & \frac{3|b|^{2}}{4} 
		\end{array}%
		\right)=\left( 
		\begin{array}{ccc}
		\frac{|a|^{2}}{3} & 0  \\ 
		0 & \frac{|b|^{2}}{4} 
		\end{array}%
		\right) %
		\end{array}%
		\]\\
		
		So, 
		\begin{equation*}
		\frac{1}{4} \langle A,A\rangle\leq \int_{\Omega}\langle T_{w}A, T_{w}A\rangle d\lambda(w) \leq \frac{1}{3} \langle A,A\rangle.
		\end{equation*}
		Which shows that  $\{T_{w}\}_{w\in \left[ 0,1\right]}$ is an integral operator frame for $\mathcal{H}$.
	\end{example}	

Let $T=\{T_{\omega}\}_{\omega\in\Omega}$ be an integral operator frame for $End_{\mathcal{A}}^{\ast}(\mathcal{H})$.\\
we define the operator $R_{T}$ by,
\begin{align*}
R_{T}:\mathcal{H}&\longrightarrow l^{2}(\Omega,\mathcal{H})\\
x&\longrightarrow R_{T}x=\{T_{\omega}x\}_{\omega\in\Omega},  
\end{align*}
The operator $R_{T}$ is called the analysis operator of the integral operator frame $\{T_{\omega}\}_{\omega\in\Omega} $.\\
The adjoint of the analysis operator $R_{T}$ is defined by,
\begin{align*}
R_{T}^{\ast}:l^{2}(\Omega,\mathcal{H})&\longrightarrow \mathcal{H}\\
\{x_{\omega}\}_{\omega\in\Omega}&\longrightarrow R_{T}^{\ast}(\{x_{\omega}\}_{\omega\in\Omega})=\int_{\Omega}T_{\omega}^{\ast}x_{\omega}d\mu(\omega).  
\end{align*}
The operator $R_{T}^{\ast}$ is called the synthesis operator of $ \{T_{\omega}\}_{\omega\in\Omega} $.\\
By composing $R_{T}$ and $R_{T}^{\ast}$, the frame operator $S_{T}:\mathcal{H}\rightarrow\mathcal{H}$ for the operator frame $T$ is given by
\begin{equation*}
S_{T}(x)=R_{T}^{\ast}R_{T}x=\int_{\Omega}T_{\omega}^{\ast}T_{\omega}xd\mu(\omega) \qquad x\in \mathcal{H}.
\end{equation*} 
Assume that $\{T_{\omega}\}_{w\in  \Omega} $ is an integral operator frame for $End_{\mathcal{A}}^{\ast}(\mathcal{H})$ and $R_{T}$, $R^{\ast}_{T}$ are the analysis integral operator frame and synthesis operator frame of $T$ respectively.
We say that $S_{T}$ is the frame operator associeted to the integral operator frame.

\begin{theorem}\label{t2}
	Assume that $S_{T}$ is the integral frame operator of an integral operator frame $\{T_{\omega}\}_{\omega \in \Omega} $ for $ End_{\mathcal{A}}^{\ast}(\mathcal{H})$ with bounds $A,B$. Then $S_{T}$ is selfadjoint, positive and invertible operator on $\mathcal{H}$. Moreover, we have a reconstruction formula : 
	\begin{equation}\label{fr}
	x=\int_{\Omega}S^{-1}_{T}T^{\ast}_{\omega}T_{\omega}xd\mu(\omega) \qquad x\in \mathcal{H}
	\end{equation}
\end{theorem}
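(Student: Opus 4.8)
The plan is to exploit the factorization $S_{T}=R_{T}^{\ast}R_{T}$ together with the defining inequalities \eqref{eq3}, and then to deduce invertibility from the Neumann-type criterion of Theorem \ref{t0}. Note first that the upper bound in \eqref{eq3} guarantees that $R_{T}$ is a well-defined bounded adjointable operator from $\mathcal{H}$ into $l^{2}(\Omega,\mathcal{H})$, so that $S_{T}=R_{T}^{\ast}R_{T}\in End_{\mathcal{A}}^{\ast}(\mathcal{H})$ makes sense.

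\emph{Selfadjointness and positivity.} Since $S_{T}=R_{T}^{\ast}R_{T}$, one has $S_{T}^{\ast}=(R_{T}^{\ast}R_{T})^{\ast}=R_{T}^{\ast}R_{T}=S_{T}$, so $S_{T}$ is selfadjoint. Moreover, for every $x\in\mathcal{H}$,
\[
\langle S_{T}x,x\rangle_{\mathcal{A}}=\langle R_{T}^{\ast}R_{T}x,x\rangle_{\mathcal{A}}=\langle R_{T}x,R_{T}x\rangle_{\mathcal{A}}=\int_{\Omega}\langle T_{\omega}x,T_{\omega}x\rangle_{\mathcal{A}}\,d\mu(\omega)\geq 0,
\]
so $S_{T}$ is positive, and by \eqref{eq3} this element of $\mathcal{A}$ satisfies $A\langle x,x\rangle_{\mathcal{A}}\leq\langle S_{T}x,x\rangle_{\mathcal{A}}\leq B\langle x,x\rangle_{\mathcal{A}}$; that is, $AI\leq S_{T}\leq BI$.

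\emph{Invertibility.} Assuming without loss of generality $A\leq B$, set $U=\frac{2}{A+B}S_{T}$. From $AI\leq S_{T}\leq BI$ we obtain $\frac{2A}{A+B}I\leq U\leq\frac{2B}{A+B}I$, hence $\frac{A-B}{A+B}I\leq U-I\leq\frac{B-A}{A+B}I$. Since $U-I$ is selfadjoint, $\|I-U\|\leq\frac{B-A}{A+B}<1$, and Theorem \ref{t0} yields that $U$, and therefore $S_{T}=\frac{A+B}{2}U$, is invertible on $\mathcal{H}$; its inverse $S_{T}^{-1}$ is again adjointable and selfadjoint. Then for $x\in\mathcal{H}$ we write $x=S_{T}^{-1}S_{T}x=S_{T}^{-1}\!\left(\int_{\Omega}T_{\omega}^{\ast}T_{\omega}x\,d\mu(\omega)\right)$, and since $S_{T}^{-1}$ is bounded, Lemma \ref{l4} allows us to move it inside the integral, giving $x=\int_{\Omega}S_{T}^{-1}T_{\omega}^{\ast}T_{\omega}x\,d\mu(\omega)$, which is \eqref{fr}.

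\emph{Main obstacle.} There is no deep difficulty here; the only point needing genuine care is the step to invertibility, where one must record the frame bounds as honest operator inequalities on $End_{\mathcal{A}}^{\ast}(\mathcal{H})$ and verify the hypothesis $\|I-U\|<1$ of Theorem \ref{t0}, rather than settling for a merely pointwise estimate. The interchange of $S_{T}^{-1}$ with the integral is routine via Lemma \ref{l4}.
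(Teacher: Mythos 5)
Your proposal is correct and follows essentially the same route as the paper: establish the operator inequality $AI\leq S_{T}\leq BI$ from \eqref{eq3}, rescale and apply Theorem \ref{t0} to get invertibility, then pull $S_{T}^{-1}$ inside the integral (Lemma \ref{l4}) for the reconstruction formula. The only cosmetic differences are your rescaling by $\tfrac{2}{A+B}$ where the paper uses $B^{-1}S_{T}$ with $\|I-B^{-1}S_{T}\|\leq\tfrac{B-A}{B}<1$, and your slightly more explicit derivation of selfadjointness and positivity from $S_{T}=R_{T}^{\ast}R_{T}$.
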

\begin{proof}
	
	Let $\{T_{\omega}\}_{\omega \in \Omega} $ be an integral operator frame with bounds $A,B$. Then for all $x\in \mathcal{H}$, we have:
	\begin{equation*}
	\langle Ax,x\rangle_{\mathcal{A}} =A\langle x,x\rangle_{\mathcal{A}}\\
	\leq \int_{\Omega}\langle T_{w}x,T_{w}x\rangle_{\mathcal{A}} d\mu(\omega)=\langle S_{T}x,x\rangle_{\mathcal{A}} \leq B\langle  x,x\rangle_{\mathcal{A}}=\langle  Bx,x\rangle_{\mathcal{A}}.
	\end{equation*}
	This shows that : $A.I\leq S_{T} \leq B.I$. Then
	\begin{equation*}
	0 \leq I-B^{-1}S_{T} \leq \frac{B-A}{B}.I,
	\end{equation*} 
	and consequently:
	\begin{equation*}
	\|I-B^{-1}S_{T} \|=\underset{x \in \mathcal{H}, \|x\|=1}{\sup}\|\langle(I-B^{-1}S_{T})x,x\rangle_{\mathcal{A}} \|\leq \frac{B-A}{B}<1.
	\end{equation*}
	By Theorem \ref{t0}, we shows that  $S_{T}$ is invertible.\\
	It's clear that $S_{T}$ is a positive operator.\\
	Further, for any $x\in \mathcal{H}$, we have :
	\begin{equation}
	x=S^{-1}_{T}S_{T}x=S^{-1}_{T}\int_{\Omega}T^{\ast}_{\omega}T_{\omega}xd\mu(\omega)=\int_{\Omega}S^{-1}_{T}T^{\ast}_{\omega}T_{\omega}xd\mu(\omega)
	\end{equation} 
\end{proof}
\begin{theorem}\label{spp}
	Let $T= \{T_{w}\}_{ w\in \Omega}\subset  End_{\mathcal{A}}^{\ast}(\mathcal{H})$ be  an integral operator Bessel family. Then $T$ is an integral operator frame for $End_{\mathcal{A}}^{\ast}(\mathcal{H})$ if and only if there exist a positive constants $A$ and $B$ such that : 
	\begin{equation} \label{12.3}
	A\|x\|^{2} \leq\| \int_{\Omega}\langle T_{\omega}x,T_{\omega}x\rangle_{\mathcal{A}} d\mu(\omega)\| \leq B\|x\|^{2},  \qquad   x\in \mathcal{H}.
	\end{equation}
\end{theorem}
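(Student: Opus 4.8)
The statement is an equivalence, and the plan is to treat the two implications separately, with essentially all the work in the converse. For the forward direction I would assume $T$ is an integral operator frame with bounds $A,B$, so that $A\langle x,x\rangle_{\mathcal{A}}\le\int_{\Omega}\langle T_\omega x,T_\omega x\rangle_{\mathcal{A}}\,d\mu(\omega)\le B\langle x,x\rangle_{\mathcal{A}}$, where all three terms are positive elements of $\mathcal{A}$. Since the $C^{\ast}$-norm is monotone on $\mathcal{A}^{+}$, applying $\|\cdot\|$ throughout and using $\|\langle x,x\rangle_{\mathcal{A}}\|=\|x\|^{2}$ immediately yields \eqref{12.3}; no extra hypothesis is needed for this half.

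For the converse I would assume \eqref{12.3} together with the Bessel bound $\int_{\Omega}\langle T_\omega x,T_\omega x\rangle_{\mathcal{A}}\,d\mu(\omega)\le B'\langle x,x\rangle_{\mathcal{A}}$. First I would observe that the Bessel condition makes the analysis operator $R_T$ bounded, hence $S_T=R_T^{\ast}R_T\in End_{\mathcal{A}}^{\ast}(\mathcal{H})$ is a well-defined bounded positive operator with $\langle S_T x,x\rangle_{\mathcal{A}}=\int_{\Omega}\langle T_\omega x,T_\omega x\rangle_{\mathcal{A}}\,d\mu(\omega)$. The key step is to pass to the positive square root: $S_T$ is a positive element of the $C^{\ast}$-algebra $End_{\mathcal{A}}^{\ast}(\mathcal{H})$, so it admits a self-adjoint square root $S_T^{1/2}\in End_{\mathcal{A}}^{\ast}(\mathcal{H})$, and then $\|\langle S_T x,x\rangle_{\mathcal{A}}\|=\|\langle S_T^{1/2}x,S_T^{1/2}x\rangle_{\mathcal{A}}\|=\|S_T^{1/2}x\|^{2}$.

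With this rewriting, hypothesis \eqref{12.3} reads $\sqrt{A}\,\|x\|\le\|S_T^{1/2}x\|\le\|S_T^{1/2}\|\,\|x\|$ for every $x\in\mathcal{H}$, and Lemma \ref{sb} then forces $S_T^{1/2}$ to be surjective. Invoking Lemma \ref{sb}(iii) (or, equivalently, Lemma \ref{l3}(ii) applied to $S_T^{1/2}$, whose square is $S_T$) gives constants $m',M'>0$ with $m'\langle x,x\rangle_{\mathcal{A}}\le\langle S_T^{1/2}x,S_T^{1/2}x\rangle_{\mathcal{A}}\le M'\langle x,x\rangle_{\mathcal{A}}$, i.e. $m'\langle x,x\rangle_{\mathcal{A}}\le\int_{\Omega}\langle T_\omega x,T_\omega x\rangle_{\mathcal{A}}\,d\mu(\omega)$; combining this with the Bessel upper bound gives exactly the integral operator frame inequality \eqref{eq3}, with lower bound $m'$ and upper bound $B'$.

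The main obstacle is precisely the converse: one cannot recover an $\mathcal{A}$-valued lower bound from a scalar-valued one just by reversing the norm, so the work lies entirely in the square-root-plus-surjectivity device (Lemma \ref{sb}), and in being careful that $S_T^{1/2}$ genuinely lives in $End_{\mathcal{A}}^{\ast}(\mathcal{H})$ and that the Bessel hypothesis is what makes $S_T$, and hence $R_T$, available to begin with.
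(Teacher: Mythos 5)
Your proposal is correct and follows essentially the same route as the paper: both directions are handled identically, with the forward implication obtained by taking norms of positive elements and the converse by rewriting $\|\int_{\Omega}\langle T_{\omega}x,T_{\omega}x\rangle_{\mathcal{A}}d\mu(\omega)\|$ as $\|S_T^{1/2}x\|^{2}$ and applying Lemma \ref{sb} to $S_T^{1/2}$ to recover the $\mathcal{A}$-valued lower bound. Your added remarks on why the Bessel hypothesis makes $S_T$ and its square root available are a welcome (if implicit in the paper) clarification, but not a different method.
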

\begin{proof}
	Suppose that \eqref{12.3} holds, we have,\\
	\begin{equation}\label{2.5}
	\langle S^{\frac{1}{2}}_{T}x,S^{\frac{1}{2}}_{T}x\rangle_{\mathcal{A}}=\langle S_{T}x,x\rangle_{\mathcal{A}} =\int_{\Omega}\langle T_{\omega}x,T_{\omega}x\rangle_{\mathcal{A}} d\mu(w).
	\end{equation}
	Using \eqref{12.3}, for all $x\in \mathcal{H}$ we have,
	\begin{equation*}
	\sqrt{A}\|x\| \leq \|S^{\frac{1}{2}}_{T}x\|\leq \sqrt{B}\|x\|, 
	\end{equation*}
	by Lemma \ref{sb}, there exist $m, M > 0 $ such that :
	\begin{equation*}
	m\langle x,x\rangle_{\mathcal{A}} \leq \langle S^{\frac{1}{2}}_{T}x,S^{\frac{1}{2}}_{T}x\rangle_{\mathcal{A}} \leq M\langle x,x\rangle_{\mathcal{A}}.
	\end{equation*}
	Therefore $\{T_{\omega} \}_{\omega\in\Omega}$ is an integral operator frame for $End_{\mathcal{A}}^{\ast}(\mathcal{H})$. \\
	Conversely, let $\{T_{\omega}\}_{\omega\in\Omega}$ be an integral operator frame for $End_{\mathcal{A}}^{\ast}(\mathcal{H})$ with bounds $A$ and $B$.\\
	Then, 
	\begin{equation}\label {2.4}
	A\langle x,x\rangle_{\mathcal{A}} \leq \int_{\Omega}\langle T_{w}x,T_{w}x\rangle_{\mathcal{A}} d\mu(w) \leq B\langle x,x\rangle_{\mathcal{A}},  \qquad x\in \mathcal{H}.
	\end{equation}
	Since $0 \leq \langle x,x\rangle_{\mathcal{A}}$, for all $x\in \mathcal{H} $, then we can take the norme in the left, middle and right termes of the above inequality \eqref{2.4}.\\
	Thus,
	\begin{equation*}
	\|A\langle x,x\rangle_{\mathcal{A}}\| \leq \| \int_{\Omega}\langle T_{\omega}x,T_{\omega}x\rangle_{\mathcal{A}} d\mu(w)\| \leq \|B\langle x,x\rangle_{\mathcal{A}}\|,  \qquad x\in \mathcal{H}.
	\end{equation*}
	finally we get,
	\begin{equation*}
	A\|x\|^{2} \leq \| \int_{\Omega}\langle T_{\omega}x,T_{\omega}x\rangle_{\mathcal{A}} d\mu(w)\| \leq B\|x\|^{2}, \qquad x\in \mathcal{H}.
	\end{equation*}
	Which ends the proof.
\end{proof}
\begin{definition}
let $T=\{T_{\omega}\}_{\omega \in \Omega}$ and $\Lambda=\{\Lambda_{\omega}\}_{\omega \in \Omega}$ be two integral operator frame for $End_{\mathcal{A}}^{\ast}(\mathcal{H})$. The integral operator frame $\Lambda$ is called a dual for $T$ if for all $x\in \mathcal{H}$ we have,
\begin{equation}
x=\int_{\Omega}T^{\ast}_{\omega}\Lambda_{\omega}x d\mu(w) \qquad 
\end{equation}  
\end{definition}
\begin{example}
Let $\mathcal{H}$ be the Hilbert space defined by:\\
$\mathcal{H}=\left\{ A=\left( 
\begin{array}{ccc}
a & 0  \\ 
0 & b 
\end{array}%
\right) \text{ / }a,b\in 
\mathbb{C}
\right\} $, \\
and let $\{T_{w}\}_{w\in \left[ 0,1\right]}$ the family defined in Example 2.2. It's clear that $T_{w}=T_{w}^{\ast}$.

For all $w \in \left[ 0,1\right] $, we define :
\begin{align*}
\Lambda_{\omega} : \mathcal{H} &\longrightarrow \mathcal{H}\\
\left(   \begin{array}{ccc}
a & 0  \\ 
0 & b 
\end{array}
\right)&\longrightarrow \left(   \begin{array}{ccc}
3wa & 0  \\ 
0 & 2\sqrt{3}wb 
\end{array}
\right)
\end{align*}
It is clear that the family $\{\Lambda_{w}\}_{w\in \left[ 0,1\right]}$ is an integral operator frameon $\mathcal{H}$.\\
Moreever, we have :
\[
\begin{array}{ccc}
\langle \Lambda_{w}A, \Lambda_{w}A\rangle = \left( 
\begin{array}{ccc}
9w^{2}|a|^{2} & 0  \\ 
0 & 12w^{2}|b|^{2} 
\end{array}%
\right) %
\end{array}%
\]\\

Hence,
\[
\begin{array}{ccc}
\int_{\Omega}\langle \Lambda_{w}A, \Lambda_{w}A\rangle d\lambda(w) =\int_{[0,1]}\left( 
\begin{array}{ccc}
9w^{2}|a|^{2} & 0  \\ 
0 & 12w^{2}|b|^{2} 
\end{array}%
\right)d\lambda=\left( 
\begin{array}{ccc}
3|a|^{2} & 0  \\ 
0 & 4|b|^{2} 
\end{array}%
\right) %
\end{array}%
\]\\

So, 
\begin{equation*}
3 \langle A,A\rangle\leq \int_{\Omega}\langle \Lambda_{w}A, \Lambda_{w}A\rangle d\lambda(w) \leq 4 \langle A,A\rangle.
\end{equation*}
Which shows that  $\{\Lambda_{w}\}_{w\in \left[ 0,1\right]}$ is a integral operator frame for $\mathcal{H}$.\\
moreover, we have for $A\in \mathcal{H}$,
\begin{equation*}
\int_{[0,1]}T_{w}^{\ast}\Lambda_{w} (A)d\lambda (w)=\int_{[0,1]}T_{w}\left( 
\begin{array}{ccc}
3wa & 0  \\ 
0 & 2\sqrt{3}wb 
\end{array}%
\right)d\lambda(\omega)=\int_{[0,1]}\left( 
\begin{array}{ccc}
3w^{2}a & 0  \\ 
0 & 3w^{2}b 
\end{array}%
\right)d\lambda(\omega)=\left( 
\begin{array}{ccc}
a & 0  \\ 
0 & b 
\end{array}%
\right)
\end{equation*}
Which shows that  $\{\Lambda_{w}\}_{w\in \left[ 0,1\right]}$ is a dual integral operator frames for $\{T_{w}\}_{w\in \left[ 0,1\right]}$.
\end{example}	

\begin{theorem}
	Every integral operator frame for $End_{\mathcal{A}}^{\ast}(\mathcal{H})$ has a dual integral operator frame.
\end{theorem}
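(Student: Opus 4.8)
The plan is to produce the dual explicitly via the canonical construction, mirroring the familiar Hilbert-space argument: given an integral operator frame $T=\{T_\omega\}_{\omega\in\Omega}$ with frame operator $S_T$, set $\Lambda_\omega := T_\omega S_T^{-1}$ for each $\omega\in\Omega$ and verify that $\Lambda=\{\Lambda_\omega\}_{\omega\in\Omega}$ is again an integral operator frame for $End_{\mathcal{A}}^{\ast}(\mathcal{H})$ which is dual to $T$ in the sense of the definition preceding the statement. By Theorem~\ref{t2}, $S_T$ is selfadjoint, positive and invertible, so $S_T^{-1}$ is a well-defined element of $End_{\mathcal{A}}^{\ast}(\mathcal{H})$ and is itself selfadjoint and positive; in particular each $\Lambda_\omega$ is adjointable with $\Lambda_\omega^{\ast}=S_T^{-1}T_\omega^{\ast}$.

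First I would check the frame inequalities for $\Lambda$. For $x\in\mathcal{H}$ we have $\langle \Lambda_\omega x,\Lambda_\omega x\rangle_{\mathcal{A}}=\langle T_\omega S_T^{-1}x, T_\omega S_T^{-1}x\rangle_{\mathcal{A}}$, so
\begin{equation*}
\int_{\Omega}\langle \Lambda_\omega x,\Lambda_\omega x\rangle_{\mathcal{A}}\,d\mu(\omega)=\int_{\Omega}\langle T_\omega (S_T^{-1}x),T_\omega (S_T^{-1}x)\rangle_{\mathcal{A}}\,d\mu(\omega)=\langle S_T S_T^{-1}x, S_T^{-1}x\rangle_{\mathcal{A}}=\langle x, S_T^{-1}x\rangle_{\mathcal{A}}=\langle S_T^{-1}x,x\rangle_{\mathcal{A}},
\end{equation*}
using that $S_T$ and $S_T^{-1}$ are selfadjoint. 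Since $A\cdot I\le S_T\le B\cdot I$ gives $B^{-1}\cdot I\le S_T^{-1}\le A^{-1}\cdot I$ (invert the order-preserving operator inequality), we conclude $B^{-1}\langle x,x\rangle_{\mathcal{A}}\le\int_{\Omega}\langle \Lambda_\omega x,\Lambda_\omega x\rangle_{\mathcal{A}}\,d\mu(\omega)\le A^{-1}\langle x,x\rangle_{\mathcal{A}}$, so $\Lambda$ is an integral operator frame with bounds $B^{-1},A^{-1}$; norm convergence of the middle integral is inherited from that of $T$ since $S_T^{-1}$ is bounded and adjointable (apply Lemma~\ref{l4} with the map $y\mapsto\langle S_T^{-1}\cdot,\cdot\rangle$ as needed).

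Next I would verify the duality relation. For $x\in\mathcal{H}$, using Lemma~\ref{l4} to pull the bounded operator $S_T^{-1}$ through the integral and the reconstruction formula \eqref{fr} from Theorem~\ref{t2},
\begin{equation*}
\int_{\Omega}T_\omega^{\ast}\Lambda_\omega x\,d\mu(\omega)=\int_{\Omega}T_\omega^{\ast}T_\omega S_T^{-1}x\,d\mu(\omega)=\left(\int_{\Omega}T_\omega^{\ast}T_\omega\,d\mu(\omega)\right)S_T^{-1}x=S_T S_T^{-1}x=x,
\end{equation*}
which is exactly the dual condition. I expect the only genuinely delicate point to be the measurability and integrability bookkeeping — ensuring $\omega\mapsto T_\omega S_T^{-1}x$ defines an element of $l^2(\Omega,\mathcal{H})$ and that Lemma~\ref{l4} applies to justify interchanging $S_T^{-1}$ with $\int_\Omega$ — but since $S_T^{-1}\in End_{\mathcal{A}}^{\ast}(\mathcal{H})$ is bounded, this follows routinely from the corresponding facts for $T$ itself. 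Everything else is a direct computation with the frame operator.
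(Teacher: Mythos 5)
Your proposal is correct and follows essentially the same route as the paper: both take the canonical dual $\Lambda_\omega=T_\omega S_T^{-1}$ and obtain the duality relation from $x=S_TS_T^{-1}x=\int_\Omega T_\omega^{\ast}T_\omega S_T^{-1}x\,d\mu(\omega)$. The only difference is cosmetic: you verify the frame inequalities by computing $\int_\Omega\langle\Lambda_\omega x,\Lambda_\omega x\rangle_{\mathcal{A}}\,d\mu(\omega)=\langle S_T^{-1}x,x\rangle_{\mathcal{A}}$ and inverting $A\cdot I\le S_T\le B\cdot I$, which gives the sharp bounds $B^{-1}$ and $A^{-1}$, whereas the paper substitutes $S_T^{-1}x$ into the frame inequality for $T$ and appeals to Lemma~\ref{sb}, obtaining cruder constants.
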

\begin{proof}
Let $T=\{T_{\omega}\}_{\omega \in \Omega}$ be an integral operator frame for $End_{\mathcal{A}}^{\ast}(\mathcal{H})$ with bounds $A$ and $B$.\\
For all $x\in \mathcal{H}$, we have,
\begin{equation}
x=S_{T}S^{-1}_{T}x=\int_{\Omega}T^{\ast}_{\omega}T_{\omega}S^{-1}_{T}xd\mu(\omega)
\end{equation}
it remains to show that $\Lambda=\{T_{\omega}S^{-1}_{T}\}_{\omega \in \Omega}$ is an integral operator frame for $End_{\mathcal{A}}^{\ast}(\mathcal{H})$. Indeed,
from \eqref{eq3}, we have,
\begin{equation*}
A\langle S^{-1}_{T}x,S^{-1}_{T}x\rangle_{\mathcal{A}} \leq\int_{\Omega}\langle T_{\omega}S^{-1}_{T}x,T_{\omega}S^{-1}_{T}x\rangle_{\mathcal{A}} d\mu(\omega)\leq B\langle S^{-1}_{T}x,S^{-1}_{T}x\rangle_{\mathcal{A}} , \quad x\in\mathcal{H}.
\end{equation*}
Since $S^{-1}_{T}$ is a surjectif and sef-adjoint operator, then there exists (from Lemma \ref{sb}) $m>0$, such that,
\begin{equation*}
Am\langle x,x\rangle_{\mathcal{A}} \leq\int_{\Omega}\langle T_{\omega}S^{-1}_{T}x,T_{\omega}S^{-1}_{T}x\rangle_{\mathcal{A}} d\mu(\omega)\leq B\|S^{-1}_{T}\|^{-2}\langle x,x\rangle_{\mathcal{A}} , \quad x\in\mathcal{H}.
\end{equation*}
which shows that $\Lambda=\{T_{\omega}S^{-1}_{T}\}_{\omega \in \Omega}$ is a dual integral operator frame for  $End_{\mathcal{A}}^{\ast}(\mathcal{H})$
\end{proof}

\begin{definition}\label{d3}
	A family of operatos $\{T_{\omega}\}_{\omega \in \Omega} $ in $End_{\mathcal{A}}^{\ast}(\mathcal{H})$ is said to be independent if the following condition is satisfied :
	\begin{equation*}
	\int_{\Omega}T^{\ast}_{\omega}x_{\omega}d\mu(\omega)=0, \quad \{x_{w}\}_{\omega \in \Omega} \in l^{2}(\Omega,\mathcal{H}) \Longrightarrow x_{\omega}=0 \quad \mu pp. \quad for\; all \quad \omega \in \Omega.
	\end{equation*}
\end{definition}
\begin{theorem}\label{t3}
	Let $T=\{T_{\omega}\}_{\omega \in \Omega} $ be an integral operator Bessel family in $End_{\mathcal{A}}^{\ast}(\mathcal{H})$. Then the following statements are true  :\begin{itemize}
		\item [1]- $T=\{T_{\omega}\}_{\omega \in \Omega} $ is an integral operator frame for $End_{\mathcal{A}}^{\ast}(\mathcal{H})$ if and only if $R_{T}$ is below bounded.
		\item [2]- $T=\{T_{\omega}\}_{\omega \in \Omega} $ is an independent integral operator frame for $End_{\mathcal{A}}^{\ast}(\mathcal{H})$ if and only if $R_{T}$ is invertible.
	\end{itemize}
\end{theorem}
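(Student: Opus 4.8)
The plan is to translate both equivalences into statements about the analysis operator $R_{T}$, using the identity $\langle R_{T}x,R_{T}x\rangle_{\mathcal{A}}=\int_{\Omega}\langle T_{\omega}x,T_{\omega}x\rangle_{\mathcal{A}}\,d\mu(\omega)$ for all $x\in\mathcal{H}$, which is immediate from the definition of the inner product on $l^{2}(\Omega,\mathcal{H})$, together with $S_{T}=R_{T}^{\ast}R_{T}$ and $R_{T}^{\ast}(\{x_{\omega}\}_{\omega})=\int_{\Omega}T_{\omega}^{\ast}x_{\omega}\,d\mu(\omega)$.

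For item 1, since $T$ is assumed to be an integral operator Bessel family, the right-hand inequality of \eqref{eq3} is already at our disposal, so $T$ is an integral operator frame precisely when there is $A>0$ with $A\langle x,x\rangle_{\mathcal{A}}\le\langle R_{T}x,R_{T}x\rangle_{\mathcal{A}}$ for every $x$; this is exactly the assertion that $R_{T}$ is below bounded. I expect no real difficulty here: it is a direct unwinding of the definitions, using as in the proof of Theorem \ref{spp} that in the presence of the Bessel bound the module estimate and the norm estimate $\sqrt{A}\,\|x\|\le\|R_{T}x\|$ may be exchanged.

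For item 2 I would first do the easy direction. If $R_{T}$ is invertible, then in particular it is surjective, so Lemma \ref{sb} supplies constants $m,M>0$ with $m\langle x,x\rangle_{\mathcal{A}}\le\langle R_{T}x,R_{T}x\rangle_{\mathcal{A}}\le M\langle x,x\rangle_{\mathcal{A}}$, which is precisely \eqref{eq3}, so $T$ is an integral operator frame; and since $R_{T}^{\ast}$ is then also invertible, hence injective, the equality $R_{T}^{\ast}(\{x_{\omega}\}_{\omega})=\int_{\Omega}T_{\omega}^{\ast}x_{\omega}\,d\mu(\omega)=0$ forces $\{x_{\omega}\}_{\omega}=0$ in $l^{2}(\Omega,\mathcal{H})$, i.e. $x_{\omega}=0$ $\mu$-a.e., which is independence. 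Conversely, let $T$ be an independent integral operator frame. By item 1 (equivalently by Theorem \ref{t2}, which gives $S_{T}=R_{T}^{\ast}R_{T}$ invertible) the operator $R_{T}$ is bounded below, hence injective with closed range, and Lemma \ref{l3}(i) makes $S_{T}$ invertible. It then remains to show $R_{T}$ is onto. I would introduce $P=R_{T}S_{T}^{-1}R_{T}^{\ast}\in End_{\mathcal{A}}^{\ast}(l^{2}(\Omega,\mathcal{H}))$, observe that it is idempotent with $\mathrm{Ran}(P)=\mathrm{Ran}(R_{T})$, and compute $R_{T}^{\ast}(I-P)=R_{T}^{\ast}-R_{T}^{\ast}R_{T}S_{T}^{-1}R_{T}^{\ast}=0$, so that $I-P$ maps $l^{2}(\Omega,\mathcal{H})$ into $\ker R_{T}^{\ast}$. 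But independence says exactly that $\ker R_{T}^{\ast}=\{0\}$, whence $P=I$ and $\mathrm{Ran}(R_{T})=l^{2}(\Omega,\mathcal{H})$. Thus $R_{T}$ is an adjointable bijection, so it is invertible in $End_{\mathcal{A}}^{\ast}(\mathcal{H},l^{2}(\Omega,\mathcal{H}))$.

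The main obstacle is the surjectivity step in the forward direction of item 2: unlike in the Hilbert space setting, boundedness below of $R_{T}$ alone does not hand us a complemented range inside the Hilbert $C^{\ast}$-module $l^{2}(\Omega,\mathcal{H})$, so one must genuinely use the invertibility of $S_{T}$ (equivalently, the closedness of the range of $R_{T}$) to construct the projection $P$ and then eliminate its defect via the independence hypothesis. Everything else reduces to routine manipulation of the identities $S_{T}=R_{T}^{\ast}R_{T}$ and $R_{T}^{\ast}(\{x_{\omega}\}_{\omega})=\int_{\Omega}T_{\omega}^{\ast}x_{\omega}\,d\mu(\omega)$.
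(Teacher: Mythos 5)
Your argument is correct, but it does not follow the paper's route at the two key points of item 2. For the implication ``independent frame $\Rightarrow R_{T}$ invertible'', the paper argues that independence gives $\ker R_{T}^{\ast}=(\mathcal{R}(R_{T}))^{\perp}=\{0\}$, concludes that $\mathcal{R}(R_{T})$ is dense, and combines this with the closedness of the range (from boundedness below) to get invertibility; you instead build the idempotent $P=R_{T}S_{T}^{-1}R_{T}^{\ast}$, note $\mathrm{Ran}(P)=\mathrm{Ran}(R_{T})$ and $R_{T}^{\ast}(I-P)=0$, and kill the defect $I-P$ with $\ker R_{T}^{\ast}=\{0\}$. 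Your version is in fact the safer one in the Hilbert $C^{\ast}$-module setting: a trivial orthogonal complement does not by itself imply density of a submodule, so the paper's ``dense'' step really rests on the complementation $l^{2}(\Omega,\mathcal{H})=\mathcal{R}(R_{T})\oplus\ker R_{T}^{\ast}$ for adjointable operators with closed range, which your explicit projection $P$ proves in this special case (at the cost of invoking the invertibility of $S_{T}$, via Theorem \ref{t2} or Lemma \ref{l3}). For the converse, the paper first gets the frame property from boundedness below and then proves independence by contradiction: it picks $0\neq\{\eta_{\omega}\}$ with $\int_{\Omega}T_{\omega}^{\ast}\eta_{\omega}\,d\mu(\omega)=0$, uses surjectivity of $R_{T}$ to write $\eta_{\omega}=T_{\omega}x$, and evaluates $\langle R_{T}x,\{\eta_{\omega}\}\rangle$ in two ways; you argue directly that invertibility of $R_{T}$ makes $R_{T}^{\ast}$ injective, which gives independence immediately, and you use Lemma \ref{sb} on the surjection $R_{T}$ to get the two-sided inequality \eqref{eq3}. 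Both converses are valid; yours is shorter and avoids the contradiction. Item 1 is declared obvious in the paper, and your one-line unwinding (Bessel bound plus the equivalence of the module and norm lower bounds as in Theorem \ref{spp}) is exactly what is needed.
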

\begin{proof}
(1): obvious.\\
	For (2), let $T=\{T_{\omega}\}_{\omega \in \Omega} $ be an independent integral operator frame for $End_{\mathcal{A}}^{\ast}(\mathcal{H})$.\\
	For all $\{x_{\omega}\}_{\omega \in \Omega} \in l^{2}(\Omega,\mathcal{H})$, we have :
	\begin{equation*}
	R^{\ast}_{T}(\{x_{\omega}\}_{\omega \in \Omega})=\int_{\Omega}T^{\ast}_{\omega}x_{\omega}d\mu(\omega)=0 \Longrightarrow \{x_{\omega}\}_{\omega \in \Omega}=0 \qquad for\textcolor{white}{.} all \quad \omega \in \Omega.
	\end{equation*}
	This prove that $R^{\ast}_{T}$ is injective.\\
	So, 
	\begin{equation*}
	(\mathcal{R}(R_{T}))^{\bot}=Ker(R^{\ast}_{T})=\{0\}.
	\end{equation*}
	This shows that $\mathcal{R}(R_{T})$ is dense in $H$.\\
	By (1), we know that $R_{T}$ is below bounded and $\mathcal{R}(R_{T})$ is closed, hence $R_{T}$ is invertible.\\
	For the converse, if $R_{T}$ is invertible, then $R_{T}$ is below bounded.\\
	Thus $T=\{T_{\omega}\}_{\omega \in \Omega} $ is an integral operator frame.\\
	Now, suppose that $T=\{T_{\omega}\}_{\omega \in \Omega} \subset End_{\mathcal{A}}^{\ast}(\mathcal{H}) $ is not independent, then there exists $0\neq \{\eta_{w}\}_{\omega \in \Omega} \in l^{2}(\Omega,\mathcal{H})$ such that,	\begin{equation}\label{eq1}
	\int_{\Omega}T^{\ast}_{\omega}\eta_{\omega}d\mu(\omega)=0
	\end{equation}
	Since $R_{T}$ is surjective then there exists $x\in \mathcal{H}$ such that,
	\begin{equation*}
	R_{T}x=\{T_{\omega}x\}_{\omega \in \Omega}=\{\eta_{\omega} \}_{\omega \in \Omega}
	\end{equation*}
	So
	\begin{equation*}
	\eta_{\omega}=T_{\omega}x \qquad \forall \omega \in \Omega
	\end{equation*}
	On one hand we have, 
	\begin{align*}
	\langle R_{T}x,\{\eta_{\omega} \}_{\omega \in \Omega}\rangle_{l^{2}(\Omega,\mathcal{H})}&=\langle \{T_{\omega}x\}_{\omega \in \Omega},\{\eta_{\omega} \}_{\omega \in \Omega}\rangle_{l^{2}(\Omega,\mathcal{H})}\\
	&=\int_{\Omega}\langle T_{\omega}x,\eta_{\omega}\rangle_{\mathcal{A}}d\mu(\omega)\\
	&=\int_{\Omega}\langle x,T^{\ast}_{\omega}\eta_{\omega}\rangle_{\mathcal{A}}d\mu(\omega)\\
	&= \langle x,\int_{\Omega}T^{\ast}_{\omega}\eta_{\omega}d\mu(\omega)\rangle_{\mathcal{A}}=0
	\end{align*}
	One other hand, we have
	\begin{align*}
	\langle R_{T}x,\{\eta_{\omega} \}_{\omega \in \Omega}\rangle_{l^{2}(\Omega,\mathcal{H})}&=\langle x,R^{\ast}_{T}(\{\eta_{\omega} \}_{\omega \in \Omega})\rangle_{\mathcal{A}}\\
	&=\langle x,\int_{\Omega}T^{\ast}_{\omega}\eta_{\omega}d\mu(\omega)\rangle_{\mathcal{A}}\\
	&=\langle x,\int_{\Omega}T^{\ast}_{\omega}T_{\omega}xd\mu(\omega)\rangle_{\mathcal{A}}\\
	&=\int_{\Omega}\langle x,T^{\ast}_{\omega}T_{\omega}x\rangle_{\mathcal{A}}d\mu(\omega)\\
	&=\int_{\Omega}\langle T_{\omega}x,T_{\omega}x\rangle_{\mathcal{A}}d\mu(\omega)\\
	&=\|\eta_{\omega}\|_{l^{2}(\Omega,\mathcal{H})}\neq 0
	\end{align*}
	It is a contradiction.\\
	This show that $T=\{T_{\omega}\}_{\omega \in \Omega} $ is independent integral operator frame for $End_{\mathcal{A}}^{\ast}(\mathcal{H})$.
\end{proof}

	\section{Perturbation of Integral Operator Frames}

\begin{theorem}
	Let $\{T_{\omega}\}_{\omega \in \Omega}$ be an integral operator frame for $End_{\mathcal{A}}^{\ast}(\mathcal{H})$ with frames bounds $A$ and $B$. Let $ K \in End_{\mathcal{A}}^{\ast}(\mathcal{H}), (K\neq 0)$, and $\{c_{\omega}\}_{\omega \in \Omega}$ any family of scalars. Then the perturbed family of operator $\{T_{\omega} + c_{\omega}K\}_{\omega \in \Omega}$ is an integral operator frames for $End_{\mathcal{A}}^{\ast}(\mathcal{H})$ if $\int_{\Omega}|c_{\omega}|^{2}d\mu(\omega) < \frac{A}{\|K\|}$.
\end{theorem}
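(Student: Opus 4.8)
The plan is to view the perturbed system inside $l^{2}(\Omega,\mathcal{H})$, bound it above and below by the triangle inequality (and its reverse) for the norm of $l^{2}(\Omega,\mathcal{H})$, and then invoke Theorem~\ref{spp} to turn the resulting scalar estimates into a genuine integral operator frame.

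\emph{Step 1 (setup).} Define $R_{K}\colon\mathcal{H}\to l^{2}(\Omega,\mathcal{H})$ by $R_{K}x=\{c_{\omega}Kx\}_{\omega\in\Omega}$. Since $\langle R_{K}x,R_{K}x\rangle_{\mathcal{A}}=(\int_{\Omega}|c_{\omega}|^{2}d\mu(\omega))\,\langle Kx,Kx\rangle_{\mathcal{A}}$ and $\int_{\Omega}|c_{\omega}|^{2}d\mu(\omega)<\infty$, the map $R_{K}$ is well defined, and Lemma~\ref{l1} gives $\|R_{K}x\|\le (\int_{\Omega}|c_{\omega}|^{2}d\mu(\omega))^{1/2}\|K\|\,\|x\|$. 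Hence $\{(T_{\omega}+c_{\omega}K)x\}_{\omega\in\Omega}=(R_{T}+R_{K})x\in l^{2}(\Omega,\mathcal{H})$, so the middle term of \eqref{eq3} for the perturbed family converges in norm and equals $\|(R_{T}+R_{K})x\|^{2}$ after taking norms. Applying \eqref{12.3} to the original frame $\{T_{\omega}\}$ (which is legitimate by Theorem~\ref{spp}) we record $\sqrt{A}\,\|x\|\le\|R_{T}x\|\le\sqrt{B}\,\|x\|$.

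\emph{Step 2 (upper bound).} The triangle inequality in $l^{2}(\Omega,\mathcal{H})$ yields $\|(R_{T}+R_{K})x\|\le\|R_{T}x\|+\|R_{K}x\|\le(\sqrt{B}+(\int_{\Omega}|c_{\omega}|^{2}d\mu(\omega))^{1/2}\|K\|)\,\|x\|$, so the perturbed family is an integral operator Bessel family and satisfies the right-hand side of \eqref{12.3} with $B'=(\sqrt{B}+(\int_{\Omega}|c_{\omega}|^{2}d\mu(\omega))^{1/2}\|K\|)^{2}$.

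\emph{Step 3 (lower bound, the crux).} By the reverse triangle inequality, $\|(R_{T}+R_{K})x\|\ge\|R_{T}x\|-\|R_{K}x\|\ge(\sqrt{A}-(\int_{\Omega}|c_{\omega}|^{2}d\mu(\omega))^{1/2}\|K\|)\,\|x\|$. The hypothesis $\int_{\Omega}|c_{\omega}|^{2}d\mu(\omega)<A/\|K\|$ is precisely what is needed so that $A'':=\sqrt{A}-(\int_{\Omega}|c_{\omega}|^{2}d\mu(\omega))^{1/2}\|K\|$ is strictly positive; thus the left-hand side of \eqref{12.3} holds with $A'=(A'')^{2}>0$. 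Finally, since $\{T_{\omega}+c_{\omega}K\}_{\omega\in\Omega}$ is an integral operator Bessel family satisfying $A'\|x\|^{2}\le\|\int_{\Omega}\langle(T_{\omega}+c_{\omega}K)x,(T_{\omega}+c_{\omega}K)x\rangle_{\mathcal{A}}d\mu(\omega)\|\le B'\|x\|^{2}$, Theorem~\ref{spp} shows it is an integral operator frame for $End_{\mathcal{A}}^{\ast}(\mathcal{H})$.

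The main obstacle is the lower bound: the reverse triangle inequality is available only for the scalar norm of $l^{2}(\Omega,\mathcal{H})$, not for the $\mathcal{A}$-valued inner product, so one cannot argue directly with \eqref{eq3}; the argument genuinely passes through Theorem~\ref{spp} in order to move between the module inequality \eqref{eq3} and the scalar inequality \eqref{12.3}. A secondary point to verify carefully while carrying out Step~1 is the exact dependence of the admissible bound on $\|K\|$, which is decided by the square appearing in $\|R_{K}x\|^{2}$.
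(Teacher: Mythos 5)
Your argument is, at its core, the same as the paper's: estimate the perturbed analysis map by the triangle and reverse triangle inequalities in $l^{2}(\Omega,\mathcal{H})$, getting upper bound $(\sqrt{B}+\sqrt{R})^{2}$ and lower bound $(\sqrt{A}-\sqrt{R})^{2}$ with $R=\|K\|^{2}\int_{\Omega}|c_{\omega}|^{2}d\mu(\omega)$. Where you differ is in the bookkeeping: the paper writes these (reverse) triangle inequalities directly for the $\mathcal{A}$-valued quantities and asserts the module inequalities \eqref{123}--\eqref{124}, which is formally dubious since $x\mapsto\langle x,x\rangle_{\mathcal{A}}^{1/2}$ does not obey a reverse triangle inequality in a Hilbert $C^{\ast}$-module; you instead work with the scalar norm of $l^{2}(\Omega,\mathcal{H})$ and route the conclusion through Theorem~\ref{spp}, which is the cleaner way to justify the same computation. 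One small patch: Theorem~\ref{spp} requires the perturbed family to be an integral operator Bessel family in the module sense, and your scalar upper bound in Step~2 does not by itself deliver the $\mathcal{A}$-valued inequality; but this follows at once from Lemma~\ref{l1} applied to the bounded adjointable operator $R_{T}+R_{K}$, so the gap is easily closed.

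The genuine problem is your closing claim in Step~3 that the hypothesis $\int_{\Omega}|c_{\omega}|^{2}d\mu(\omega)<A/\|K\|$ is ``precisely what is needed'' for $\sqrt{A}-(\int_{\Omega}|c_{\omega}|^{2}d\mu(\omega))^{1/2}\|K\|>0$. It is not: positivity of that quantity is equivalent to $\int_{\Omega}|c_{\omega}|^{2}d\mu(\omega)<A/\|K\|^{2}$, which the stated hypothesis implies only when $\|K\|\leq 1$. You even flagged the square in $\|R_{K}x\|^{2}$ as the point to check, but then resolved it the wrong way. To be fair, the paper's own proof commits the identical slip (it proves the result under $R<A$ and then mis-translates this as $\int_{\Omega}|c_{\omega}|^{2}d\mu(\omega)<A/\|K\|$), and the theorem as stated is in fact false for $\|K\|>1$: take $T_{\omega}=I$ on $\Omega=[0,1]$ (so $A=B=1$), $K=\lambda I$ with $\lambda>1$ and $c_{\omega}=-1/\lambda$; then $\int_{\Omega}|c_{\omega}|^{2}d\mu=1/\lambda^{2}<1/\lambda=A/\|K\|$, yet $T_{\omega}+c_{\omega}K=0$ is no frame. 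So your proof (like the paper's) actually establishes the theorem with the hypothesis corrected to $\int_{\Omega}|c_{\omega}|^{2}d\mu(\omega)<A/\|K\|^{2}$, and you should state that condition rather than claim equivalence with the one printed in the statement.
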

\begin{proof}
	Let $\Gamma_{\omega}= T_{\omega} + c_{\omega}K$,  for all $\omega \in \Omega$. Then for all $x\in \mathcal{H}$, we have,
	\begin{align*}
	\int_{\Omega}\langle T_{\omega}x-\Gamma_{\omega}x,T_{\omega}x-\Gamma_{\omega}x\rangle_{\mathcal{A}} d\mu(\omega)&=\int_{\Omega}\langle c_{\omega}Kx,c_{\omega}Kx\rangle_{\mathcal{A}} d\mu(\omega)\\
	&\leq \int_{\Omega}|c_{\omega}|^{2}\|K\|^{2}\langle x,x\rangle_{\mathcal{A}} d\mu(\omega)\\
	&\leq R\langle x,x\rangle_{\mathcal{A}}. \quad where \quad  R=\int_{\Omega}|c_{\omega}|^{2}\|K\|^{2}d\mu(\omega).
	\end{align*}
	On one hand, for all $ x\in \mathcal{H}$, we have, 
	\begin{align*}
	(\int_{\Omega}\langle (T_{\omega} + c_{\omega}K)x,(T_{\omega} + c_{\omega}K)x\rangle_{\mathcal{A}} d\mu(\omega))^{\frac{1}{2}}&=\|(T_{\omega} + c_{\omega}K)x\|_{l^{2}(\Omega,H)}\\
	&\leq \|T_{\omega}x\|_{l^{2}(\Omega,H)} + \| c_{\omega}Kx\|_{l^{2}(\Omega,H)}\\
	&= (\int_{\Omega}\langle T_{\omega}x,T_{\omega}x\rangle_{\mathcal{A}} d\mu(\omega))^{\frac{1}{2}}  \\
	& \quad + (\int_{\Omega}\langle (c_{\omega}K)x,(c_{\omega}K)x\rangle_{\mathcal{A}} d\mu(\omega))^{\frac{1}{2}}\\
	&\leq \sqrt{B}(\langle x,x\rangle_{\mathcal{A}})^{\frac{1}{2}} +  \sqrt{R}(\langle x,x\rangle_{\mathcal{A}})^{\frac{1}{2}}\\
	&\leq (\sqrt{B} + \sqrt{R})(\langle x,x\rangle_{\mathcal{A}})^{\frac{1}{2}}.
	\end{align*}
	Then 
	\begin{equation}\label{123}
	\int_{\Omega}\langle (T_{\omega} + c_{\omega}K)x,(T_{\omega} + c_{\omega}K)x\rangle_{\mathcal{A}} d\mu(\omega)\leq (\sqrt{B} + \sqrt{R})^{2}\langle x,x\rangle_{\mathcal{A}}.
	\end{equation}
	On the other hand, for all $x\in H$, we have,
	\begin{align*}
	(\int_{\Omega}\langle (T_{\omega} + c_{\omega}K)x,(T_{\omega} + c_{\omega}K)x\rangle_{\mathcal{A}} d\mu(\omega))^{\frac{1}{2}}&=\|(T_{\omega} + c_{\omega}K)x\|_{l^{2}(\Omega,\mathcal{H})}\\
	&\geq \|T_{\omega}x\|_{l^{2}(\Omega,\mathcal{H})} - \| c_{\omega}Kx\|_{l^{2}(\Omega,\mathcal{H})}\\
	&\geq (\int_{\Omega}\langle T_{\omega}x,T_{\omega}x\rangle_{\mathcal{A}} d\mu(\omega))^{\frac{1}{2}} \\
	& \textcolor{white}{.....} - (\int_{\Omega}\langle c_{\omega}Kx,c_{\omega}Kx\rangle_{\mathcal{A}} d\mu(\omega))^{\frac{1}{2}}\\
	&\geq \sqrt{A}(\langle x,x\rangle_{\mathcal{A}})^{\frac{1}{2}} -  \sqrt{R}(\langle x,x\rangle_{\mathcal{A}})^{\frac{1}{2}}\\
	&\geq (\sqrt{A} -  \sqrt{R})(\langle x,x\rangle_{\mathcal{A}})^{\frac{1}{2}}.
	\end{align*}
	So,
	\begin{equation}\label{124}
	\int_{\Omega}\langle (T_{\omega} + c_{\omega}K)x,(T_{\omega} + c_{\omega}K)x\rangle_{\mathcal{A}} d\mu(\omega) \geq  (\sqrt{A} -  \sqrt{R})^{2}\langle x,x\rangle_{\mathcal{A}} 
	\end{equation}
	From \eqref{123} and \eqref{124} we conclude that  $\{T_{\omega} + c_{\omega}K\}_{\omega \in \Omega}$ is an integral operator frame for $End_{\mathcal{A}}^{\ast}(\mathcal{H})$ if $R<A$, that is , if :
	\begin{equation*}
	\int_{\Omega}|c_{\omega}|^{2}d\mu(\omega) < \frac{A}{\|K\|}.
	\end{equation*}
	
\end{proof}
\begin{theorem}
	Let $\{T_{\omega}\}_{\omega \in \Omega} \subset End_{\mathcal{A}}^{\ast}(\mathcal{H})$ be an integral operator frame. Let $\{\Lambda _{\omega}\}_{\omega \in \Omega}$ be any family of operators on $End_{\mathcal{A}}^{\ast}(\mathcal{H})$, and $\{a_{\omega}\}_{\omega \in \Omega},\, \{b_{\omega}\}_{\omega \in \Omega} \subset \mathbb{R}$ be two positively  confined sequences. If there exist a constants $\alpha , \beta$  with $0\leq \alpha , \beta<\frac{1}{2}$ such that,
	\begin{align}\label{1000}
	\int_{\Omega}\langle a_{\omega}T_{\omega}x - b_{\omega}\Lambda _{\omega}x,a_{\omega}T_{\omega}x - b_{\omega}\Lambda _{\omega}x\rangle d\mu(\omega)&\leq \alpha\int_{\Omega}\langle a_{\omega}T_{\omega}x,a_{\omega}T_{\omega}x\rangle d\mu(\omega) \\
	&\textcolor{white}{.....} + \beta\int_{\Omega}\langle b_{\omega}\Lambda _{\omega}x,b_{\omega}\Lambda _{\omega}x\rangle d\mu(\omega).
	\end{align}
	Then $\{\Lambda _{\omega}\}_{\omega \in \Omega}$ is an integral operator frame for $End_{\mathcal{A}}^{\ast}(\mathcal{H})$.
\end{theorem}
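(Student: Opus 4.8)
The plan is to derive, for every $x\in\mathcal H$, an $\mathcal A$-valued sandwich
\[
C\,\langle x,x\rangle_{\mathcal A}\le\int_{\Omega}\langle\Lambda_{\omega}x,\Lambda_{\omega}x\rangle_{\mathcal A}\,d\mu(\omega)\le D\,\langle x,x\rangle_{\mathcal A}
\]
with $0<C\le D<\infty$, which is exactly \eqref{eq3} for $\{\Lambda_{\omega}\}_{\omega\in\Omega}$. The idea is to work inside $l^{2}(\Omega,\mathcal H)$ with the three vectors
\[
u_{x}=\{a_{\omega}T_{\omega}x\}_{\omega\in\Omega},\qquad v_{x}=\{b_{\omega}\Lambda_{\omega}x\}_{\omega\in\Omega},\qquad u_{x}-v_{x}=\{a_{\omega}T_{\omega}x-b_{\omega}\Lambda_{\omega}x\}_{\omega\in\Omega},
\]
so that $\langle u_{x},u_{x}\rangle=\int_{\Omega}a_{\omega}^{2}\langle T_{\omega}x,T_{\omega}x\rangle\,d\mu(\omega)$ and $\langle v_{x},v_{x}\rangle=\int_{\Omega}b_{\omega}^{2}\langle\Lambda_{\omega}x,\Lambda_{\omega}x\rangle\,d\mu(\omega)$, and hypothesis \eqref{1000} says precisely $\langle u_{x}-v_{x},u_{x}-v_{x}\rangle\le\alpha\langle u_{x},u_{x}\rangle+\beta\langle v_{x},v_{x}\rangle$ (convergence of the three integrals is built into \eqref{1000} together with the Bessel property of $\{T_{\omega}\}_{\omega\in\Omega}$). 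Since $\{a_{\omega}\}_{\omega\in\Omega}$ and $\{b_{\omega}\}_{\omega\in\Omega}$ are positively confined, I fix constants $0<m_{a}\le a_{\omega}\le M_{a}$ and $0<m_{b}\le b_{\omega}\le M_{b}$ for $\mu$-a.e.\ $\omega$, which gives $m_{a}^{2}\int_{\Omega}\langle T_{\omega}x,T_{\omega}x\rangle\,d\mu\le\langle u_{x},u_{x}\rangle\le M_{a}^{2}\int_{\Omega}\langle T_{\omega}x,T_{\omega}x\rangle\,d\mu$, and similarly for $v_{x}$ with $m_{b},M_{b}$ and $\{\Lambda_{\omega}\}_{\omega\in\Omega}$.

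The core of the argument is the elementary module inequality: for all $y,z\in l^{2}(\Omega,\mathcal H)$ and all $t>0$,
\[
\langle y+z,y+z\rangle\le(1+t)\langle y,y\rangle+(1+t^{-1})\langle z,z\rangle,
\]
which follows at once from $0\le\langle\sqrt{t}\,y-t^{-1/2}z,\ \sqrt{t}\,y-t^{-1/2}z\rangle$, i.e.\ from $\langle y,z\rangle+\langle z,y\rangle\le t\langle y,y\rangle+t^{-1}\langle z,z\rangle$. Applying it with $y=u_{x}$, $z=v_{x}-u_{x}$, $t=1$, and substituting \eqref{1000} (using $\langle v_{x}-u_{x},v_{x}-u_{x}\rangle=\langle u_{x}-v_{x},u_{x}-v_{x}\rangle$), I get $\langle v_{x},v_{x}\rangle\le(2+2\alpha)\langle u_{x},u_{x}\rangle+2\beta\langle v_{x},v_{x}\rangle$, hence, because $\beta<\tfrac12$ makes $1-2\beta>0$,
\[
\langle v_{x},v_{x}\rangle\le\frac{2+2\alpha}{1-2\beta}\,\langle u_{x},u_{x}\rangle.
\]
Running the same computation with $y=v_{x}$, $z=u_{x}-v_{x}$, $t=1$ gives $\langle u_{x},u_{x}\rangle\le(2+2\beta)\langle v_{x},v_{x}\rangle+2\alpha\langle u_{x},u_{x}\rangle$, and since $\alpha<\tfrac12$ makes $1-2\alpha>0$,
\[
\langle v_{x},v_{x}\rangle\ge\frac{1-2\alpha}{2+2\beta}\,\langle u_{x},u_{x}\rangle.
\]

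It remains to unwind these two $\mathcal A$-valued inequalities. Combining the confinement relations with the frame inequality \eqref{eq3} for $\{T_{\omega}\}_{\omega\in\Omega}$ (bounds $A,B$) yields $m_{a}^{2}A\langle x,x\rangle\le\langle u_{x},u_{x}\rangle\le M_{a}^{2}B\langle x,x\rangle$; chaining with the two displays above and with $m_{b}^{2}\int_{\Omega}\langle\Lambda_{\omega}x,\Lambda_{\omega}x\rangle\,d\mu\le\langle v_{x},v_{x}\rangle\le M_{b}^{2}\int_{\Omega}\langle\Lambda_{\omega}x,\Lambda_{\omega}x\rangle\,d\mu$ isolates the middle term:
\[
\frac{(1-2\alpha)\,m_{a}^{2}A}{(2+2\beta)\,M_{b}^{2}}\,\langle x,x\rangle_{\mathcal A}\le\int_{\Omega}\langle\Lambda_{\omega}x,\Lambda_{\omega}x\rangle_{\mathcal A}\,d\mu(\omega)\le\frac{(2+2\alpha)\,M_{a}^{2}B}{(1-2\beta)\,m_{b}^{2}}\,\langle x,x\rangle_{\mathcal A},\qquad x\in\mathcal H,
\]
with a strictly positive lower coefficient, which is the desired integral operator frame inequality for $\{\Lambda_{\omega}\}_{\omega\in\Omega}$ (standardness being inherited from \eqref{1000}). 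The only genuinely delicate point is the second paragraph: in a Hilbert $C^{\ast}$-module the cross terms $\langle u_{x},v_{x}-u_{x}\rangle+\langle v_{x}-u_{x},u_{x}\rangle$ need not be positive and cannot simply be dropped, so the whole estimate rests on the $(1+t),(1+t^{-1})$ splitting, and the choice $t=1$ is what pins the hypothesis down to $\alpha,\beta<\tfrac12$. An alternative, norm-based route runs through Theorem \ref{spp}: taking norms in \eqref{1000} gives $\|u_{x}-v_{x}\|\le\sqrt{\alpha}\,\|u_{x}\|+\sqrt{\beta}\,\|v_{x}\|$ in $l^{2}(\Omega,\mathcal H)$, and the triangle inequality then sandwiches $\|v_{x}\|$ between positive multiples of $\|u_{x}\|$; this is shorter but only reproduces the result for $\alpha,\beta<1$, so the $\mathcal A$-valued computation is the one matching the stated hypothesis.
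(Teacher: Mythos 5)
Your proof is correct and follows essentially the same route as the paper's: the same splitting of $b_{\omega}\Lambda_{\omega}x$ as $a_{\omega}T_{\omega}x$ plus the difference, the same doubling inequality $\langle y+z,y+z\rangle_{\mathcal{A}}\le 2\langle y,y\rangle_{\mathcal{A}}+2\langle z,z\rangle_{\mathcal{A}}$ (the $t=1$ case of your $(1+t),(1+t^{-1})$ estimate), rearrangement using $1-2\alpha>0$ and $1-2\beta>0$, and then the positive-confinement constants together with the frame bounds of $\{T_{\omega}\}_{\omega\in\Omega}$. If anything, your write-up is tidier than the paper's, since you justify the doubling inequality in the $C^{\ast}$-module order and keep every step $\mathcal{A}$-valued, including the final passage from $\int_{\Omega}\langle T_{\omega}x,T_{\omega}x\rangle_{\mathcal{A}}\,d\mu(\omega)$ to $\langle x,x\rangle_{\mathcal{A}}$, which the paper leaves implicit while drifting between module-valued and norm inequalities.
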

\begin{proof}
	Suppose \eqref{1000} holds for some conditions of theorem.\\
	Then for all $x\in H$ we have,
	\begin{align*}
	\int_{\Omega}\langle b_{\omega}\Lambda _{\omega}x,b_{\omega}\Lambda _{\omega}x\rangle d\mu(\omega)&\leq 2(\int_{\Omega}\langle a_{\omega}T_{\omega}x,a_{\omega}T_{\omega}x\rangle d\mu(\omega)\\
	&\textcolor{white}{.....} +\int_{\Omega}\langle a_{\omega}T_{\omega}x - b_{\omega}\Lambda _{\omega}x,a_{\omega}T_{\omega}x - b_{\omega}\Lambda _{\omega}x\rangle d\mu(\omega) ) \\
	&\leq 2(\int_{\Omega}\langle a_{\omega}T_{\omega}x,a_{\omega}T_{\omega}x\rangle d\mu(\omega) +\alpha\int_{\Omega}\langle a_{\omega}T_{\omega}x,a_{\omega}T_{\omega}x\rangle d\mu(\omega)\\
	&\textcolor{white}{.....} + \beta\int_{\Omega}\langle b_{\omega}\Lambda _{\omega}x, b_{\omega}\Lambda _{\omega}x\rangle d\mu(\omega)).
	\end{align*}
	So,
	\begin{equation*}
	(1-2\beta)\int_{\Omega}\langle b_{\omega}\Lambda _{\omega}x, b_{\omega}\Lambda _{\omega}x\rangle d\mu(\omega))\leq 2(1+\alpha)\int_{\Omega}\langle a_{\omega}T_{\omega}x,a_{\omega}T_{\omega}x\rangle d\mu(\omega).
	\end{equation*}
	This give,
	\begin{equation*}
	(1-2\beta)[\underset{\omega \in \Omega}{\inf}( b_{\omega})]^{2}\int_{\Omega}\langle \Lambda _{\omega}x,\Lambda _{\omega}x\rangle d\mu(\omega)\leq 2(1+\alpha)[\underset{\omega \in \Omega}{\sup}( a_{\omega})]^{2}\int_{\Omega}\langle T_{\omega}x,T_{\omega}x\rangle d\mu(\omega).
	\end{equation*}
	Thus,
	\begin{equation}\label{101}
	\int_{\Omega}\langle \Lambda _{\omega}x,\Lambda _{\omega}x\rangle d\mu(\omega)\leq \frac{ 2(1+\alpha)[\underset{\omega \in \Omega}{\sup}( a_{\omega})]^{2}}{(1-2\beta)[\underset{\omega \in \Omega}{\inf}( b_{\omega})]^{2}}\int_{\Omega}\langle T_{\omega}x,T_{\omega}x\rangle d\mu(\omega).
	\end{equation}
	Also, we have,
	\begin{align*}
	\int_{\Omega}\langle a_{\omega}T_{\omega}x,a_{\omega}T_{\omega}x\rangle d\mu(\omega)&\leq 2(\int_{\Omega}\|a_{\omega}T_{\omega}x - b_{\omega}\Lambda _{\omega}x\|^{2}d\mu(\omega) + \int_{\Omega}\|b_{\omega}\Lambda _{\omega}x\|^{2}d\mu(\omega))\\
	&\leq 2(\alpha\int_{\Omega}\|a_{\omega}T_{\omega}x\|^{2}d\mu(\omega) + \beta\int_{\Omega}\|b_{\omega}\Lambda _{\omega}x\|^{2}d\mu(\omega)+\int_{\Omega}\|b_{\omega}\Lambda _{\omega}x\|^{2}d\mu(\omega)).
	\end{align*}
	Therefore,
	\begin{equation*}
	(1-2\alpha)[\underset{\omega \in \Omega}{\inf}( a_{\omega})]^{2}\int_{\Omega}\|T_{\omega}x\|^{2}d\mu(\omega)\leq 2(1+\beta)[\underset{\omega \in \Omega}{\sup}( b_{\omega})]^{2}\int_{\Omega}\|\Lambda _{\omega}x\|^{2}d\mu(\omega).
	\end{equation*}
	This give:
	\begin{equation}\label{102}
	\frac{(1-2\alpha)[\underset{\omega \in \Omega}{\inf}( a_{\omega})]^{2}}{2(1+\beta)[\underset{\omega \in \Omega}{\sup}( b_{\omega})]^{2}}\int_{\Omega}\|T_{\omega}x\|^{2}d\mu(\omega)\leq \int_{\Omega}\|\Lambda _{\omega}x\|^{2}d\mu(\omega)
	\end{equation}
	From \eqref{101} and \eqref{102} we conclude,
	\begin{equation*}
	\frac{(1-2\alpha)[\underset{\omega \in \Omega}{\inf}( a_{\omega})]^{2}}{2(1+\beta)[\underset{\omega \in \Omega}{\sup}(
		 b_{\omega})]^{2}}\int_{\Omega}\|T_{\omega}x\|^{2}d\mu(\omega)\leq \int_{\Omega}\langle \Lambda _{\omega}x,\Lambda _{\omega}x\rangle \mu(\omega)\leq \frac{ 2(1+\alpha)[\underset{\omega \in \Omega}{\sup}( a_{\omega})]^{2}}{(1-2\beta)[\underset{\omega \in \Omega}{\inf}( b_{\omega})]^{2}}\int_{\Omega}\|T_{\omega}x\|^{2}d\mu(\omega).
	\end{equation*}
	which ends the proof.
\end{proof}

\end{document}